\newtheorem{algorithm}{Algorithm}
\newtheorem{assumption}{Assumption}
\newtheorem{lem}{Lemma}
\newtheorem{thm}{Theorem}
\newtheorem{Def}{Definition}
\newtheorem{Col}{Corollary}
\newtheorem{remark}{Remark}
\newtheorem{example}{Example}
\tikzstyle{startstop} = [rectangle, rounded corners, minimum width = 2cm, minimum height=1cm,text centered, draw = black, fill = red!40]
\tikzstyle{io} = [rectangle, rounded corners, minimum width = 2cm, minimum height=1cm,text centered, draw = black, fill = blue!40]
\tikzstyle{process} = [rectangle, rounded corners, minimum width = 2cm, minimum height=1cm,text centered, draw = black, fill = yellow!50]
\tikzstyle{decision} = [rectangle, rounded corners, minimum width = 2cm, minimum height=1cm,text centered, draw = black, fill = green!40]
\tikzstyle{arrow} = [->,>=stealth]
\title{No-Regret Learning in Network Stochastic Zero-Sum Games}
\author[a,b]{Shijie Huang}
\author[c]{Jinlong Lei}
\author[c,a]{Yiguang Hong}
\affil[a]{Key Laboratory of Systems and Control, Academy of Mathematics and Systems Science, Chinese Academy of Sciences}
\affil[b]{School of Mathematical Sciences, University of Chinese Academy of Sciences}
\affil[c]{Department of Control Science and Engineering \& Shanghai Research Institute for Intelligent Autonomous Systems, Tongji University}
\date{}
\begin{document}
%\begin{CJK*}{GBK}{hei}
\maketitle
\definecolor{shadecolor}{rgb}{0.9,0.9,0.9}
\begin{abstract}
	No-regret learning has been widely used to compute a Nash equilibrium in two-person zero-sum games. However, there is still a lack of regret analysis for network stochastic zero-sum games, where players competing in two subnetworks only have access to some local information, and the cost functions include uncertainty. Such a game model can be found in security games, when a group of inspectors work together to detect a group of evaders. In this paper, we propose a distributed stochastic mirror descent (D-SMD) method, and establish the regret bounds $O(\sqrt{T})$ and $O(\log T)$ in the expected sense for convex-concave and strongly convex-strongly concave costs, respectively. Our bounds match those of the best known first-order online optimization algorithms. We then prove the convergence of the time-averaged iterates of D-SMD to the set of Nash equilibria. Finally, we show that the actual iterates of D-SMD almost surely converge to the Nash equilibrium in the strictly convex-strictly concave setting.
\end{abstract}

%\begin{IEEEkeywords}
%	no-regret learning, network stochastic zero-sum game, distributed stochastic mirror descent
%\end{IEEEkeywords}

%\IEEEpeerreviewmaketitle

%%%%
%%%%-----------------
\section{INTRODUCTION}
Two-person zero-sum games \cite{mv:53} are ubiquitous and well-researched topics in economics, convex optimization and robust optimization \cite{ben:09}. They are related to a variety of artificial intelligence problems, such as boosting \cite{fs:96}, generative adversarial networks (GAN) \cite{gp:14}, and poker games \cite{bb:15,ms:17}. So far, researchers have mainly focused on computing a Nash equilibrium (NE) \cite{n:51} and made significant progress. Specifically, no-regret learning has proved to be an extremely versatile tool in this direction. For instance, some typical no-regret algorithms such as follow the regularized leader, mirror descent (MD) and its variants \cite{rs:13a,ahk:12,z:17}, have become popular for finding a NE of a two-person zero-sum game. More recently, these algorithms have paved the way to designing algorithms with faster rates for both regret and convergence to a NE \cite{rs:13b,ddk:11,kh:18}.

However, those algorithms rely on having access to complete information of the players. In practice, we may encounter the class of network games such as network zero-sum games, where the players are partitioned into two subnetworks and each player only has access to local information \cite{gharesifard2013distributed,lh:15}. The security game involving a group of evaders and a group of inspectors \cite{cc:16} could be an example. Additionally, many game problems in machine learning, such as GAN and model-based reinforcement learning \cite{rmk:20}, are complicated by uncertainty. Such problems may be modeled by stochastic Nash games, where the cost functions are expectation-valued. In this paper we consider no-regret learning in network stochastic zero-sum games.

Compared with two-person zero-sum games, no-regret learning in network stochastic zero-sum games is more challenging. First of all, one needs to define a regret different from the classical regret due to the absence of complete information, which also brings difficulties to the regret analysis. Although the time-averaged iterates of no-regret learning algorithms are guaranteed to converge to a NE in two-person zero-sum games \cite{rs:13b}, whether such a property can be extended to a network stochastic zero-sum game remains unexplored. In addition, each player cannot accurately evaluate its (sub)gradient since the cost functions are expectation-valued. As a consequence, convergence analysis in a network stochastic zero-sum game may require different techniques.
\subsection{Contributions}
In this work, we propose a distributed stochastic mirror descent (D-SMD) method for network stochastic zero-sum games, and establish the theoretical results regarding the regret bounds and convergence guarantees. We elaborate on our contributions below.
\begin{itemize}
	\item {\it Regret bounds of D-SMD}: We show that D-SMD achieves the regret bounds of $O(\sqrt{T})$ and $O(\log T)$ in the convex-concave and strongly convex-strongly concave cases, respectively. Despite the influence of the network parameters, our results match the regret order of MD in the convex and strongly convex cases \cite{s:11,ss:07}.
	\item {\it Convergence guarantees of D-SMD}: We establish the mean convergence of the time-averaged iterates to the set of Nash equilibria in the convex-concave and strongly convex-strongly concave settings, and provide the convergence rates. In addition, we also prove that the iterates of D-SMD converge to the unique NE with probability one in the strictly convex-strictly concave case.  
\end{itemize}
For the sake of readability, we have moved all the omitted proofs to the appendix.

\subsection{Related Work}
We briefly review two kinds of related works: no-regret learning in games and NE seeking in zero-sum games.

{\bf No-regret learning in games.} No-regret algorithms for two-person games were first proposed by \cite{h:57} and \cite{b:56}, and further studied in the work of \cite{fv:99}. In \cite{fs:99}, the regret bound of the multiplicative weights algorithm was established, which immediately yields $O(T^{-\frac{1}{2}})$ convergence rate. To obtain a faster algorithm, Nesterov's excessive gap technique was adopted to exhibit a near-optimal algorithm with convergence rate $O(\frac{(\ln T)^{3/2}}{T})$ in \cite{ddk:11}. Later a simpler no-regret framework with rate $O(\frac{\ln T}{T})$ based on the optimistic MD was proposed by \cite{rs:13b} and this rate was further improved to $O(\frac{1}{T})$ by \cite{kh:18}. In addition, no-regret learning was also widely studied in zero-sum extensive-form games, due to the success of CFR framework \cite{zj07} and its variants \cite{lw:09,t:14} in solving the game of limit Texas hold'em \cite{bb:15}. Recently, no-regret learning was extended to other form of games. \cite{hst:15} considered no-regret learning and its outcomes in Bayesian games, while \cite{sbk:19} proposed a no-regret algorithm for unknown games with correlated payoffs. \cite{mz:19} studied the last-iterate convergence of a no-regret algorithm to a NE of variationally stable games and \cite{l:20} further proved the finite-time last-iterate convergence rate of online gradient descent learning in cococercive games.   

{\bf NE seeking in zero-sum games.} There is a large amount of work on computing a NE of zero-sum games (or saddle point problems). We recall some works that focus on games with continuous strategy sets. \cite{hs:06} established the convergence of continuous-time best response dynamics to the NE set. \cite{nj:09} proposed a stochastic MD scheme to solve a convex-concave saddle point problem. Later on, \cite{clo:14} presented a stochastic accelerated primal-dual method with optimal convergence rate. In \cite{pb:16} and \cite{ykh:20}, the authors studied the computation of saddle points in strongly convex-strongly concave and non-convex-non-concave settings, respectively. Moreover, \cite{lei2020synchronous} considered the Nash equilibrium computation for general-sum stochastic Nash games. \cite{b:21} designed a primal-dual algorithm for constrained markov decision process (equivalent to a saddle point problem) and utilized regret analysis to prove zero constraint violation. In a different line of research, \cite{srivastava2013distributed} considered the case when a network of cooperative agents need to solve a zero-sum game and proposed a distributed Bregman-divergence algorithm to compute a NE. \cite{gharesifard2013distributed} introduced a continuous-time distributed dynamics for a more general framework of network zero-sum games, where two network of agents are involved in a zero-sum game. \cite{lh:15} further extended the framework to time-varying networks and designed a distributed projected subgradient descent algorithm.

\section{Preliminaries \& Problem Formulation}

{\bf Notations.} For a matrix $A = [a_{ij}]$, $a_{ij}$ denotes the element in the $i$th row and $j$th column. For a function $f(x_1,\dots,x_N)$, we use $\partial_i f$ to denote the subdifferential of $f$ with respect to $x_i$. Given a norm $\|\cdot\|$ on $\mathbb{R}^n$, $\|y\|_{\ast}:= \sup_{\|x\|\le 1}\langle y,x\rangle$ denotes the dual norm. A digraph is characterized by $\mathcal{G} = (\mathcal{V},\mathcal{E})$, where $\mathcal{V} = \{1,\dots,n\}$ is the set of nodes and $\mathcal{E}\subset\mathcal{V}\times\mathcal{V}$ is the set of edges. A path from $i_1$ to $i_p$ is an alternating sequence of edges $(i_1,i_2),(i_2,i_3),\dots, (i_{p-1},i_p)$ in the digraph with distinct nodes $i_m\in\mathcal{V}$, $\forall m: 1\le m\le p$. A digraph is strongly connected if there is a path between any pair of distinct nodes.

\subsection{Two-Network Stochastic Zero-Sum Game}

{\bf Two-Person Zero-Sum Game and Nash Equilibrium.} We recall the definition of a two-person zero-sum game and the sufficient conditions to ensure the existence of a Nash equilibrium.
\begin{Def}
	A two-person zero-sum game consists of two players who select strategies from nonempty sets $X_1$ and $X_2$, respectively. Players observe a cost function $U_1: X_1\times X_2\to\mathbb{R}$ and $U_2: X_1\times X_2\to\mathbb{R}$ that satisfy  $U_1(x_1,x_2) + U_2(x_1,x_2) = 0$ for all $(x_1,x_2)\in X_1\times X_2$. 
\end{Def} 
Define $U:= U_1 = -U_2$ as the cost function of the game. The most widely used solution concept in non-cooperative games is that of a Nash equilibrium (NE), which is formally defined as follows.
\begin{Def}
	A strategy profile $x^{\ast} = (x_1^{\ast},x_2^{\ast})$ is a Nash equilibrium (NE) of a two-person zero-sum game if $U(x_1^{\ast},x_2)\le U(x_1^{\ast},x_2^{\ast})\le U(x_1,x_2^{\ast})$ for all $(x_1,x_2)\in X_1\times X_2$.
\end{Def}
\begin{thm}[Existence of NE \cite{gharesifard2013distributed}]
	Suppose that the strategy sets $X_1$ and $X_2$ are compact and convex. If the cost function $U$ is continuous and convex-concave (convex in $x_1$, concave in $x_2$) over $X_1\times X_2$, then there exists a NE for the considered two-person zero-sum game.
\end{thm}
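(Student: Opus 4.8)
The plan is to prove the existence of a Nash equilibrium by appealing to the classical minimax theorem together with a fixed-point argument. The statement to prove is that under compactness and convexity of $X_1, X_2$ and continuity plus convex-concavity of $U$, a saddle point $(x_1^*, x_2^*)$ exists. The central object is the gap between the two optimization orderings, namely $\inf_{x_1} \sup_{x_2} U(x_1,x_2)$ versus $\sup_{x_2} \inf_{x_1} U(x_1,x_2)$. A pair $(x_1^*,x_2^*)$ is a Nash equilibrium precisely when these two quantities coincide and are attained at $(x_1^*,x_2^*)$, so the heart of the argument is to show the minimax equality holds and that the outer extrema are achieved.

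First I would establish that the relevant extrema are attained. Since $U$ is continuous and $X_2$ is compact, for each fixed $x_1$ the function $x_2 \mapsto U(x_1,x_2)$ attains its supremum, and the induced function $\phi(x_1) := \sup_{x_2 \in X_2} U(x_1,x_2)$ is itself continuous (indeed, it is a supremum of a jointly continuous family over a compact set, hence lower semicontinuous, and upper semicontinuity follows from the compactness and continuity as well); it is moreover convex in $x_1$ as a pointwise supremum of convex functions. By compactness of $X_1$, $\phi$ attains its minimum at some $x_1^*$. A symmetric argument on $\psi(x_2) := \inf_{x_1 \in X_1} U(x_1,x_2)$, which is concave and continuous, yields a maximizer $x_2^*$. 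This secures attainment and reduces the problem to the minimax equality $\min_{x_1}\max_{x_2} U = \max_{x_2}\min_{x_1} U$.

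Next I would invoke Sion's minimax theorem (or von Neumann's in the finite-dimensional convex-concave setting), whose hypotheses are exactly convexity and compactness of the strategy sets together with the convex-concave and semicontinuity structure of $U$ that we have just verified. This gives the equality of the two iterated extrema. Combining the equality with the attainment from the previous step, one checks directly that the pair $(x_1^*, x_2^*)$ formed from the individual optimizers satisfies the saddle-point inequalities of the Nash equilibrium definition: for all $x_2$, $U(x_1^*, x_2) \le \phi(x_1^*) = \max_{x_2}\min_{x_1} U = \psi(x_2^*) \le U(x_1, x_2^*)$ for all $x_1$, and evaluating at the optimizers pins down $U(x_1^*,x_2^*)$ between these bounds, yielding the required chain $U(x_1^*, x_2) \le U(x_1^*, x_2^*) \le U(x_1, x_2^*)$.

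The main obstacle, and the step deserving the most care, is the minimax equality itself rather than the attainment bookkeeping. An alternative self-contained route that avoids citing Sion's theorem is a Brouwer/Kakutani fixed-point argument applied to the best-response correspondence $x \mapsto \arg\min_{x_1} U(x_1, x_2) \times \arg\max_{x_2} U(x_1, x_2)$; here one must verify that this correspondence is nonempty-, convex-, and compact-valued with a closed graph (upper hemicontinuity), which follows from continuity of $U$ and convexity in each argument, and then Kakutani's fixed-point theorem delivers a fixed point that is exactly a Nash equilibrium. I would likely prefer the minimax-theorem route for brevity, treating the verification of Sion's hypotheses as the crux, since the convex-concavity and continuity assumptions are tailored to fit them.
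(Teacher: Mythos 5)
Your argument is correct and would serve as a valid self-contained proof: attainment of the outer extrema follows from continuity of $U$ and compactness of $X_1$ and $X_2$ (Berge's maximum theorem gives continuity of $\phi$ and $\psi$, and convexity/concavity of these envelopes is immediate), the minimax equality follows from Sion's theorem, whose hypotheses (quasi-convexity--quasi-concavity, semicontinuity in each variable, compactness of at least one strategy set) are strictly weaker than the stated assumptions, and the saddle-point chain assembled from the two optimizers is exactly the Nash equilibrium condition. Note, however, that the paper does not prove this theorem at all: it is stated as an imported result with a citation to \cite{gharesifard2013distributed}, so there is no in-paper argument to compare yours against; your route (Sion/von Neumann plus attainment, with the Kakutani best-response fixed point as an alternative) is the standard one. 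One small point of hygiene: lower semicontinuity of $\phi$ requires no compactness, since any pointwise supremum of continuous functions is lower semicontinuous; it is the upper semicontinuity of $\phi$ that uses compactness of $X_2$ together with joint continuity of $U$.
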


{\bf A Two-Network Zero-Sum Game} \cite{gharesifard2013distributed,lh:15} is a generalized two-person zero-sum game, defined by a tuple $(\{\Sigma_1,\Sigma_2\},X_1\times X_2,U)$. The two players $\Sigma_1$ and $\Sigma_2$ are directed networks composed of $n_1$ agents and $n_2$ agents. For $l = 1,2$, $X_l\subset\mathbb{R}^{m_l}$, denoting the strategy set of $\Sigma_l$, is assumed to be compact and convex. The cost function $U: X_1\times X_2\to\mathbb{R}$ is defined by
\[U(x_1,x_2) = \frac{1}{n_1}\sum_{i=1}^{n_1}f_{1,i}(x_1,x_2) = -\frac{1}{n_2}\sum_{j=1}^{n_2}f_{2,j}(x_1,x_2),\]
where $f_{1,i}$ is a convex-concave continuous cost function associated with agent $i$ in $\Sigma_1$ and $f_{2,j}$ is a concave-convex continuous cost function associated with agent $j$ in $\Sigma_2$. The networks have no global decision-making capability and each agent only knows its own cost function. Within the same network, neighboring agents can exchange information. Moreover, the interaction between the two networks is specified by a bipartite network $\Sigma_{12}$, which means that each network can also obtain information about the other network through $\Sigma_{12}$. The goal of the agents in $\Sigma_1$ ($\Sigma_2$) is to collaboratively minimize (maximize) the cost function $U$ based on local information.

More precisely, $\Sigma_1$, $\Sigma_2$ and $\Sigma_{12}$ are described by three directed graph sequences $\mathcal{G}_1(t) = (\mathcal{V}_1,\mathcal{E}_1(t))$, $\mathcal{G}_2(t) = (\mathcal{V}_2,\mathcal{E}_2(t))$ and $\mathcal{G}_{12}(t) = (\mathcal{V}_1\cup\mathcal{V}_2,\mathcal{E}_{12}(t))$, where $\{\mathcal{G}_1(t)\}$ and $\{\mathcal{G}_2(t)\}$ are uniformly jointly strongly connected\footnote{Namely, there exists an integer $B_l\ge 1$ such that the union graph $(\mathcal{V}_l,\bigcup_{t=k}^{k+B_l-1}\mathcal{E}_l(t))$ is strongly connected for $k\ge 0$.}. The communication between agents are modeled by mixing matrices $W_1(t)$, $W_2(t)$ and $W_{12}(t)$, which satisfy (i) for each $l\in\{1,2\}$, $w_{l,ij}(t)\ge\eta$ with $0 < \eta < 1$ when $(j,i)\in\mathcal{E}_l(t)$, and $w_{l,ij}(t) = 0$ otherwise; $w_{12,ij}(t) > 0$ only if $(i,j)\in\mathcal{E}_{12}(t)$;
(ii) for each $i,j\in\mathcal{V}_l$, $\sum_{j = 1}^{n_l}w_{l,ij}(t) = \sum_{i = 1}^{n_l}w_{l,ij}(t) = 1$; (iii) for each $i\in\mathcal{V}_l$, $\sum_{j = 1}^{n_{3-l}}w_{12,ij}(t) = 1$. Agent $i\in\mathcal{V}_l$ can only communicate directly with its neighbors $\mathcal{N}_l^i(t) := \{j\mid(j,i)\in\mathcal{E}_l(t)\}$ and $\mathcal{N}_{12,l}^i(t) \triangleq \{j\mid (j,i)\in\mathcal{E}_{12}(t)\}$. \cite{no:10} proved the following result.
\begin{lem}\label{lem_graph}
	Let $\Phi_l(t,s) = W_l(t)W_l(t-1)\cdots W_l(s)$ ($l = 1,2$) be the transition matrices. Then for all $t,s$ with $t\ge s\ge 0$, we have
	\[\bigg|[\Phi_l(t,s)]_{ij} - \frac{1}{n_l}\bigg| \le \Gamma_l\theta_l^{t-s},\quad l = 1,2\]
	where $\Gamma_l = (1 - \eta/4n_l^2)^{-2}$ and $\theta_l = (1 - \eta/4n_l^2)^{1/B_l}$.
\end{lem}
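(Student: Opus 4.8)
The plan is to view each column of $\Phi_l(t,s)$ as the state of a consensus iteration driven by the doubly stochastic matrices $W_l(\cdot)$ and to show that this state contracts geometrically to the uniform vector $\frac{1}{n_l}\mathbf{1}$; I treat one network at a time. By condition (ii), each $W_l(t)$ has unit row and column sums with nonnegative entries, so every $W_l(t)$, and hence every product $\Phi_l(t,s)$, is doubly stochastic. Fix a column index $j$ and a starting time $s$, and set $v(t) := \big([\Phi_l(t,s)]_{ij}\big)_i$. From $\Phi_l(t,s) = W_l(t)\Phi_l(t-1,s)$ we obtain the averaging recursion $v(t) = W_l(t)v(t-1)$, and column-stochasticity gives $\sum_i v_i(t) = 1$ for all $t$, so the entries of $v(t)$ have mean exactly $1/n_l$. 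Consequently
\[\Big|[\Phi_l(t,s)]_{ij} - \tfrac{1}{n_l}\Big| \le \max_i v_i(t) - \min_i v_i(t) =: \mathrm{sp}(v(t)),\]
and it suffices to bound the span $\mathrm{sp}(v(t))$.

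Since each $W_l(t)$ is row-stochastic, every coordinate $v_i(t)$ is a convex combination of the $v_k(t-1)$, so $\mathrm{sp}(v(t))$ is nonincreasing in $t$; the content is that over windows long enough for information to traverse the network it strictly contracts. Using the uniform lower bound $w_{l,ij}(t)\ge\eta$ on active edges together with the positive diagonals (self-loops) and the uniform joint strong connectivity — the union of the graphs over any block of $B_l$ consecutive times is strongly connected — one shows that any two coordinates of $v$ are pulled toward a common value over each such block. Tracking $\max_i v_i$ and $\min_i v_i$ across a block and bounding, through the edge weights and the at-most-$n_l$ agents along a connecting path, the amount by which the extreme coordinates move, yields the per-block contraction
\[\mathrm{sp}\big(v(t)\big) \le \Big(1 - \tfrac{\eta}{4n_l^2}\Big)\,\mathrm{sp}\big(v(t - B_l)\big).\]

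Writing $a := 1 - \eta/4n_l^2 \in (0,1)$ and iterating this estimate over $\lfloor (t-s)/B_l\rfloor$ full blocks, while using monotonicity of the span on the leftover steps, gives $\mathrm{sp}(v(t)) \le a^{\lfloor (t-s)/B_l\rfloor}\,\mathrm{sp}(v(s))$. Since $\mathrm{sp}(v(s))\le 1$ and $\lfloor (t-s)/B_l\rfloor \ge (t-s)/B_l - 1$, and $a<1$, we have $a^{\lfloor (t-s)/B_l\rfloor} \le a^{-1}\theta_l^{t-s} \le a^{-2}\theta_l^{t-s}$ with $\theta_l = a^{1/B_l}$, so that $|[\Phi_l(t,s)]_{ij} - 1/n_l| \le \Gamma_l\theta_l^{t-s}$ with $\Gamma_l = a^{-2} = (1 - \eta/4n_l^2)^{-2}$, as claimed. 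The same argument applies verbatim for $l = 1,2$.

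The main obstacle is the per-block span contraction in the second paragraph: because the graphs are only jointly connected and time-varying, positivity of entries propagates along time-respecting paths rather than all at once, and the delicate part is to convert the combinatorial fact that the union over $B_l$ steps is strongly connected into the explicit quantitative constant $1 - \eta/(4n_l^2)$. This requires carefully selecting, for the coordinates attaining the current maximum and minimum, a chain of intermediate agents along which a definite fraction of mass — controlled by $\eta$ and the agent count $n_l$ — is guaranteed to be exchanged within the block.
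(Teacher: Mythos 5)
There is a genuine gap at the heart of your argument, namely the per-block contraction $\mathrm{sp}(v(t))\le(1-\eta/4n_l^2)\,\mathrm{sp}(v(t-B_l))$, which you assert ("one shows that\dots") but do not prove --- and which is in fact false as stated. A concrete counterexample: take $B_l=1$ and let $\mathcal{G}_l(t)$ be a fixed path graph on $n_l\ge 4$ nodes (so the graph is strongly connected at \emph{every} step), with any admissible doubly stochastic weights, and let $v=(1,\dots,1,0,\dots,0)$ with at least two ones and two zeros. After one step the first coordinate is still $1$ and the last is still $0$, so the span does not contract at all over the window. The reason is structural: span contraction requires every coordinate to mix with \emph{both} extremes, which on a path takes $n_l-1$ rounds of communication; joint strong connectivity over a $B_l$-window only guarantees that the product over $(n_l-1)B_l$ steps has all entries bounded below (by roughly $\eta^{(n_l-1)B_l}$, using the self-loops to let information "wait" at nodes). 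A correct span-based argument therefore contracts only over windows of length $(n_l-1)B_l$ and with a factor that is \emph{exponentially} small in $n_l$ and $B_l$, not the polynomial factor $\eta/4n_l^2$ appearing in the lemma. So your route cannot produce the stated constants $\Gamma_l=(1-\eta/4n_l^2)^{-2}$ and $\theta_l=(1-\eta/4n_l^2)^{1/B_l}$.

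Note also that the paper does not prove this lemma; it imports it from \cite{no:10}, and the specific constants betray the mechanism used there: one tracks the quadratic disagreement $V(v)=\sum_i(v_i-1/n_l)^2$ rather than the span. Double stochasticity gives the per-step inequality $V(W_l(t)v)\le V(v)-\tfrac{\eta}{2}\sum_{(i,j)\in\mathcal{E}_l(t)}(v_i-v_j)^2$, and connectivity of the (union) graph lower-bounds the edge-difference sum by $V(v)/n_l^2$ via a path/Cauchy--Schwarz argument, yielding $V\le(1-\eta/2n_l^2)V$ per window; the final passage from $V$ to the entrywise bound via a square root (using $\sqrt{1-x}\le 1-x/2$) is precisely what produces the $4$ in $4n_l^2$ and the exponent $-2$ in $\Gamma_l$. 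Your setup (columns of $\Phi_l$ as consensus iterates with mean exactly $1/n_l$, and the final geometric bookkeeping with $\lfloor(t-s)/B_l\rfloor$) is fine and would complete the proof once the correct contraction estimate is substituted, but the quantitative core needs to be the quadratic Lyapunov argument, not the span argument you sketch.
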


{\bf Two-Network Stochastic Zero-Sum Game.} Consider a stochastic generalization of a two-network zero-sum game, where the cost function of each agent is expectation-valued. To be specific, we assume that $f_{l,i}$ is the expected value of a stochastic mapping $\psi_{l,i}: X_1\times X_2\times\mathbb{R}^d\to\mathbb{R}$, i.e.,
\[f_{l,i}(x_1,x_2):=\mathbb{E}[\psi_{l,i}(x_1,x_2;\xi(\omega))],\quad l\in\{1,2\}\]
where the expectation is taken with respect to the random vector $\xi: \Omega\to\mathbb{R}^d$ defined on a probability space $(\Omega,\mathcal{F},\mathbb{P})$. Such stochastic models represent a natural extension of two-network zero-sum games and find their applicability when the evaluation of the deterministic cost function is corrupted by errors. However, deterministic methods cannot be used to solve a two-network stochastic zero-sum game directly since generally the expectation cannot be evaluated efficiently or the underlying distribution $\mathbb{P}$ is unknown. This characteristic also makes the analysis of algorithm performance more complicated.

\subsection{No-Regret Learning}
In a no-regret learning framework \cite{z:03}, for $l = 1,2$, each agent $i\in\mathcal{V}_l$ plays repeatedly against the agents in $\Sigma_{3-l}$ by making a sequence of decisions from $X_l$. At each round $t = 1,\dots, T$ of a learning process, each agent $i$ in $\Sigma_l$ selects a strategy $x_{l,i}(t)\in X_l$ based on the available information, and receives a cost $f_{1,i}(x_{1,i}(t),u_{2,i}(t))$, where $u_{2,i}(t) \triangleq \sum_{j\in\mathcal{N}_{12,1}^i(t)}w_{12,ij}(t)x_{2,j}(t)$ is the weighted information received from its neighbors $\mathcal{N}_{12,1}^i(t):=\{j\in\mathcal{V}_2|(j,i)\in\mathcal{E}_{12}(t)\}$. Since $x_{1,i}(t)$ and $u_{2,i}(t)$ are generated with noisy information, we consider a notation different from the classical regret, called pseudo regret \cite[Section 2.1.2]{m:19}.
\begin{Def}
	The pseudo regret of $\Sigma_1$ associated with agent $i$ cumulated up to time $T$ is defined as
	\begin{align}
		\quad\bar{R}_1^{(i)}(T)
		&= \mathbb{E}\left[\sum_{t=1}^TU(x_{1,i}(t),u_{2,i}(t))\right]\notag\\
		&\quad - \min_{x_1\in\mathcal{X}_1}\mathbb{E}\left[\sum_{t=1}^TU(x_1,u_{2,i}(t))\right].\label{regret_def}
	\end{align}
\end{Def}
Intuitively, $\bar{R}_1^{(i)}(T)$ represents the maximum expected gain agent $i\in\Sigma_1$ could have achieved by playing the single best fixed strategy in case the estimated sequence of $\Sigma_2$'s strategies $\{u_{2,i}(t)\}_{t=1}^T$ and the cost functions were known in hindsight. An algorithm is referred to as no-regret for network $\Sigma_1$ if for all $i$, $\bar{R}_1^{(i)}(T)/T\to 0$ as $T\to\infty$.

%%%%
%%%%-----------------
\section{Proposed D-SMD Algorithm}
Our algorithm uses the notion of {\it prox-mapping}. For $l = 1,2$, $x,p\in X_l$, let the Bregman divergence be defined by
\begin{equation}\label{breg_def}
	D_{\psi_l}(x,p) := \psi_l(x) - \psi_l(p) - \langle\nabla\psi_l(p),x - p\rangle,
\end{equation}
where $\psi_l$ is a $1$-strongly convex differentiable regularizer on $\mathcal{X}_l$. Then the Bregman divergence generates an associated prox-mapping defined as
\begin{equation}\label{prox_mapping}
	P_x^l(y) := \arg\min_{x'\in X_l}\{\langle y,x - x'\rangle + D_{\psi_l}(x',x)\}.
\end{equation}
Two typical examples of prox-mapping includes Euclidean projection and multiplicative weights \cite{s:11}.
\begin{example}\label{exm1}
	Let $\psi_l(x) = \frac{1}{2}\|x\|_2^2$. Then the associated prox-mappping is 
	\[P_x^l(y) = \arg\min_{x'\in X_l}\|x' - x - y\|^2.\] 
\end{example}
\begin{example}\label{exm2}
	Let $X_l$ be a $d_l$-dimension simplex and $\psi_l(x) = \sum_{j=1}^{d_l}x_j\log x_j$ be the entropic regularizer. The induced prox-mapping becomes the well-known multiplicative weights rule \cite{fs:99}
	\[P_x^l(y) = \frac{(x_j\exp(y_j))_{j=1}^{d_l}}{\sum_{j=1}^{d_l}x_j\exp(y_j)}.\]
\end{example}

The distributed stochastic mirror descent algorithm proceeds as follows. In the $t$-th iteration, each agent replaces its local estimates of the states of $\Sigma_1$ and $\Sigma_2$ with the weighted averages of its neighbors in $\Sigma_1$ and $\Sigma_2$, respectively. Then it calculates a sampled subgradient of its local cost at the replaced estimates, and updates its estimate by a prox-mapping. The complete algorithm is summarized in Algorithm \ref{alg1}.  

\begin{algorithm}[tb]
	\caption{Distributed Stochastic Mirror Descent (D-SMD)}
	\label{alg1}
	\textbf{Input}: Non-increasing nonnegative step-size sequence $\{\alpha(t)\ge 0\}_{t\ge 0}$, mixing matrices $\{W_1(t)\}_{t\ge 0}$, $\{W_2(t)\}_{t\ge 0}$ and $\{W_{12}(t)\}_{t\ge 0}$\\
	\textbf{Initialize}: $x_{l,i}(0)\in\mathcal{X}_l$ for each $i \in\mathcal{V}_l$ and $l=1,2$
	\begin{algorithmic}[1] %[1] enables line numbers
		\REPEAT 
		\FOR {network $l=1,2$}
		\FOR {agent $i = 1,\dots,n_l$}
		\STATE Calculate weighted average of neighbors in $\Sigma_l$ and $\Sigma_{3-l}$:\\
		$v_{l,i}(t) = \sum_{j=1}^{n_l}w_{l,ij}(t)x_{l,j}(t)$\\
		$u_{3-l,i}(t) = \sum_{j=1}^{n_{3-l}}w_{12,ij}(t)x_{3-l,j}(t)$\\
		\STATE Receive a sampled subgradient $\hat{g}_{l,i}(t)\in\partial_l\psi_{l,i}(v_{l,i}(t),u_{3-l,i}(t);\xi_{l,i}(t))$, where the terms $\xi_{l,i}(t)$ are independent and identically distributed realizations of the random variable $\xi$.\\
		\STATE Update local estimates $x_{l,i}(t+1)$:\\
		$x_{l,i}(t+1) = P_{v_{l,i}(t)}^l(-\alpha(t)\hat{g}_{l,i}(t))$
		\ENDFOR
		\ENDFOR
		\UNTIL Convergence
	\end{algorithmic}
\end{algorithm}

\begin{remark}
	To compute a NE of network stochastic zero-sum games, the stochastic mirror descent method for convex-concave saddle point problems \cite{nj:09} requires the global information of the network. While in our algorithm, each agent merely communicates
	its decisions with its neighbors to update the estimates .   
\end{remark}

Let $\mathcal{F}_t := \sigma\{x_{l,i}(0),\xi_{l,i}(s), l = 1,2, i\in\mathcal{V}_l, 0\le s\le t-1\}$ denote the $\sigma$-algebra generated by all the information up to time $t-1$. Then, by Algorithm \ref{alg1}, $x_{l,i}(t)$, $v_{l,i}(t)$ and $u_{l,i}(t)$ are adapted to $\mathcal{F}_t$. Denote by $g_{1,i}(t)\in\partial_1f_{1,i}(v_{1,i}(t),u_{2,i}(t))$ and $g_{2,i}(t)\in\partial_2f_{2,i}(u_{1,i}(t),v_{2,i}(t))$ the subgradients of $f_{1,i}$ and $f_{2,i}$ evaluated at $v_{1,i}(t)$ and $v_{2,i}(t)$. In the following, We draw three necessary assumptions, which are all standard and widely used in stochastic approximation and distributed optimization \cite{nj:09,srivastava2013distributed}.
\begin{assumption}\label{asm1}
	For $l = 1,2$, $i\in\mathcal{V}_l$, the cost function $f_{l,i}(\cdot,\cdot)$ is Lipschitz continuous over $X_1\times X_2$, i.e., there exists a constant $L > 0$ such that, for all $x_1,x_1'\in X_1$ and $x_2,x_2'\in X_2$,
	$$|f_{l,i}(x_1,x_2) - f_{l,i}(x_1',x_2')|\le L(\|x_1 - x_1'\| + \|x_2 - x_2'\|).$$
\end{assumption}
\begin{assumption}\label{asm2}
	There exists $\nu_l > 0$ such that for $l = 1,2$, and each $i\in\mathcal{V}_l$,
	\[\mathbb{E}[\hat{g}_{l,i}(t)|\mathcal{F}_t] = g_{l,i}(t)\  \text{and}\ \ \mathbb{E}[\|g_{l,i}(t) - \hat{g}_{l,i}(t)\|_{\ast}^2|\mathcal{F}_t]\le \nu_l^2.\]
\end{assumption}
\begin{assumption}\label{asm3}
	For $l = 1,2$, the Bregman divergence $D_{\psi_l}(x,y)$ is convex in $y$ and satisfies
	\[x_k\to x\quad\Rightarrow \quad D_{\psi_l}(x_k,x)\to 0\footnote{The regularizers mentioned in Examples \ref{exm1} and \ref{asm2} both satisfy this condition, which is called Bregman reciprocity \cite{mz:19}.}.\]
\end{assumption}

\section{Guarantees on Regret}
In this section, we establish regret bounds of D-SMD in convex-concave and strongly convex-strongly concave settings. 
\subsection{Convex-Concave Case}
This part presents a regret bound that holds at all time $T$ for D-SMD when the cost function $f_{1,i}(\cdot,\cdot)$ is convex-concave for all $i\in\mathcal{V}_1$.

Theorem \ref{thm1} provides a general bound for any choice of (non-increasing) step-size sequence $\{\alpha(t)\}_{t=1}^T$. It will then be the basis for Corollary \ref{col1}, which gives a way to select the algorithm parameters to achieve a sublinear regret.  
\begin{thm}\label{thm1}
	Let the cost function $f_{1,i}(\cdot,\cdot)$ be convex-concave and Assumptions \ref{asm1}-\ref{asm3} hold. Then the pseudo regret of D-SMD defined by \eqref{regret_def} is bounded by
	\begin{align}
		\bar{R}_1^{(i)}(T)&\le \sum_{t=1}^T\sum_{l=1}^2(L+\nu_l)(9L + \nu_l)\alpha(t-1)\notag\\
		&\quad + 4L\sum_{t=1}^T\sum_{l=1}^2n_l\Gamma_l(L + \nu_l)\sum_{s=1}^{t-1}\theta_l^{t-1-s}\alpha(s-1) \notag\\
		&\quad + 4L\sum_{t=1}^T\sum_{l=1}^2n_l\Gamma_l\theta_l^{t-1}\Lambda_l + \frac{R_1^2}{\alpha(T)}\label{regret_bound_final}
	\end{align}
	where $R_1^2:=\max\{D_{\psi_1}(x_1,x_1^{'}): x_1,x_1^{'}\in X_1\}$ is the diameter of $X_1$.
\end{thm}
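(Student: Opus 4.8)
The plan is to combine a per-agent mirror-descent inequality with a consensus analysis driven by Lemma~\ref{lem_graph}, thereby bridging the gap between the local subgradients of $f_{1,i}$ that the algorithm actually uses and the global cost $U=\frac{1}{n_1}\sum_j f_{1,j}$ appearing in \eqref{regret_def}. First I would record the standard prox-mapping estimate: from the optimality condition of \eqref{prox_mapping} together with the $1$-strong convexity of $\psi_l$, the update $x_{l,i}(t)=P^{l}_{v_{l,i}(t-1)}(-\alpha(t-1)\hat g_{l,i}(t-1))$ satisfies, for every fixed $z\in X_l$,
$$\alpha(t-1)\langle \hat g_{l,i}(t-1),v_{l,i}(t-1)-z\rangle \le D_{\psi_l}(z,v_{l,i}(t-1))-D_{\psi_l}(z,x_{l,i}(t))+\tfrac12\alpha(t-1)^2\|\hat g_{l,i}(t-1)\|_\ast^2,$$
together with the one-step bound $\|x_{l,i}(t)-v_{l,i}(t-1)\|\le \alpha(t-1)\|\hat g_{l,i}(t-1)\|_\ast$. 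Assumption~\ref{asm1} gives $\|g_{l,i}\|_\ast\le L$, so Assumption~\ref{asm2} yields $\mathbb E\|\hat g_{l,i}\|_\ast\le L+\nu_l$ and $\mathbb E\|\hat g_{l,i}\|_\ast^2\le(L+\nu_l)^2$; taking conditional expectations and using $\mathbb E[\hat g_{l,i}(t)\mid\mathcal F_t]=g_{l,i}(t)$ removes the sampling noise from the linear term.

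Next I would decompose the regret. Fixing the deterministic hindsight minimizer $x_1^\ast$ of $\mathbb E\sum_t U(x_1,u_{2,i}(t))$, I write $U(x_{1,i}(t),u_{2,i}(t))-U(x_1^\ast,u_{2,i}(t))=\frac1{n_1}\sum_j[f_{1,j}(x_{1,i}(t),u_{2,i}(t))-f_{1,j}(x_1^\ast,u_{2,i}(t))]$, and for each $j$ transport the evaluation point from agent $i$'s estimates to agent $j$'s own estimates $(v_{1,j}(t-1),u_{2,j}(t-1))$ by Assumption~\ref{asm1}. This produces a descent part $\frac1{n_1}\sum_j[f_{1,j}(v_{1,j}(t-1),u_{2,j}(t-1))-f_{1,j}(x_1^\ast,u_{2,j}(t-1))]$, which convexity of $f_{1,j}$ in its first argument bounds by $\frac1{n_1}\sum_j\langle g_{1,j}(t-1),v_{1,j}(t-1)-x_1^\ast\rangle$ and hence by the mirror-descent estimate above, plus residual terms of the form $L\|x_{1,i}(t)-v_{1,j}(t-1)\|$ and $L\|u_{2,i}(t)-u_{2,j}(t-1)\|$.

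The third step is the consensus analysis. Writing the update as the perturbed averaging recursion $x_{l,i}(t)=\sum_k w_{l,ik}(t-1)x_{l,k}(t-1)+e_{l,i}(t-1)$ with $\|e_{l,i}(t-1)\|\le\alpha(t-1)\|\hat g_{l,i}(t-1)\|_\ast$, unrolling through the transition matrices $\Phi_l$ and subtracting the average $\bar x_l(t)$ (preserved by double stochasticity) gives, via Lemma~\ref{lem_graph},
$$\mathbb E\|x_{l,i}(t)-\bar x_l(t)\|\le n_l\Gamma_l\theta_l^{t-1}\Lambda_l+n_l\Gamma_l(L+\nu_l)\sum_{s}\theta_l^{\,t-1-s}\alpha(s-1)+2(L+\nu_l)\alpha(t-1).$$
Since $v_{l,j}$ and $u_{l,j}$ are convex combinations of the $x_{l,k}$ with row/column sums one, the same bound controls $\|v_{1,j}-\bar x_1\|$ and $\|u_{2,i}-\bar x_2\|$; in particular the $\Sigma_2$ disagreement enters through $u_{2,i}-u_{2,j}$, which is exactly why the final bound sums over $l=1,2$. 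Multiplying these consensus estimates by the factor $L$ from Assumption~\ref{asm1} and summing over $t$ splits each into a current-step contribution (the $\alpha(t-1)$ term, folded into the first sum of \eqref{regret_bound_final}), a geometric history contribution (the $\theta_l^{t-1-s}\alpha(s-1)$ double sum), and an initial-condition contribution (the $\theta_l^{t-1}\Lambda_l$ term); the cross-time drift $\|\bar x_l(t)-\bar x_l(t-1)\|$ is itself $O(\alpha(t-1))$ and lands in the current-step term as well.

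Finally I would assemble everything. Summing the linear terms $\frac{\alpha(t-1)}2(L+\nu_l)^2$ and all current-step consensus tails, each weighted by $L$ or $L+\nu_l$, collapses into the constant $(L+\nu_l)(9L+\nu_l)$ multiplying $\alpha(t-1)$. For the Bregman part I divide the mirror-descent estimate by $\alpha(t-1)$, use $\frac1{n_1}\sum_j D_{\psi_1}(x_1^\ast,v_{1,j}(t))\le\frac1{n_1}\sum_j D_{\psi_1}(x_1^\ast,x_{1,j}(t))$ (convexity of $D_{\psi_1}$ from Assumption~\ref{asm3} plus column stochasticity) so that the per-agent divergences telescope on average, and apply Abel summation with the non-increasing step sizes and $D_{\psi_1}\le R_1^2$ to obtain the $R_1^2/\alpha(T)$ term. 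I expect the main obstacle to be precisely this bridging step: the algorithm only ever touches the local $f_{1,i}$ and its subgradient at agent-specific estimates, so transporting every summand of $U$ to the point where a genuine subgradient is available, while keeping the induced errors uniformly controllable by Lemma~\ref{lem_graph} and ensuring the network mixing does not destroy the telescoping, is where the two-network structure truly bites and is what forces both networks' consensus errors into the bound.
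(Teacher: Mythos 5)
Your proposal is correct and follows essentially the same route as the paper's proof: the per-agent prox-mapping inequality telescoped via the convexity of $D_{\psi_1}$ in its second argument together with double stochasticity, the unbiasedness argument to pass from $\hat g_{1,i}$ to $g_{1,i}$, the consensus bounds from Lemma~\ref{lem_graph} controlling $\|x_{1,i}(t)-\bar x_1(t)\|$, $\|v_{1,i}(t)-\bar x_1(t)\|$ and $\|u_{2,i}(t)-\bar x_2(t)\|$, and the Lipschitz bridging between the local costs at agent-specific estimates and the global cost $U$. The only cosmetic difference is that you transport evaluation points agent-to-agent (incurring a $\bar x_l$ drift term), whereas the paper routes everything through the network averages $\bar x_1(t),\bar x_2(t)$ at a fixed time $t$; both are controlled by the same consensus estimates and yield the stated bound.
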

The constants of the regret bound depend on the Lipschitz constants, the connectivity of the communication networks and the sampling error of the subgradients. Compared to the regret bound of the centralized online mirror descent with step-sizes $\{\alpha(t)\}_{t\ge 1}$ \cite{s:11}, Theorem \ref{thm1} shows that D-SMD suffers from an additional term $4\sum_{t=1}^T\sum_{l=1}^2(L\frac{n_l(L + \nu_l)\Gamma_l}{1-\theta_l} + n_l\Gamma_l\theta_l^{t-1}\Lambda_l)$, which is caused by the incomplete information of the agents. An immediate corollary is that D-SMD achieves a sublinear regret when $\alpha(t) = t^{-(\frac{1}{2} + \epsilon)}$, where $\epsilon\in [0,1/2)$. Specifically, if we set $\alpha(t) = 1/\sqrt{t}$, it is easy to check that $\bar{R}_1^{(i)}(T) = O(\sqrt{T})$, matching the optimal regret order for convex objectives \cite{h:16}. We formally state the result in the following corollary.
\begin{Col}\label{col1}
	Let the conditions stated in Theorem \ref{thm1} hold. Then, for $\epsilon\in[0,\frac{1}{2})$, Algorithm \ref{alg1} with step-size sequence $\alpha(t) = t^{-(\frac{1}{2} + \epsilon)}$ yields a pseudo regret of order
	\[\bar{R}_1^{(i)}(T) \le O(T^{\frac{1}{2} + \epsilon}).\]
\end{Col}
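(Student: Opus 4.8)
The plan is to substitute the explicit step-size $\alpha(t) = t^{-(1/2+\epsilon)}$ into the four summations of the bound \eqref{regret_bound_final} in Theorem \ref{thm1} and estimate the $T$-dependence of each one separately. Since the prefactors $(L+\nu_l)(9L+\nu_l)$, $n_l\Gamma_l(L+\nu_l)$, $\Lambda_l$, and $R_1^2$ are all constants independent of $T$, the corollary reduces to showing that each of the four $T$-sums is $O(T^{1/2+\epsilon})$, and then identifying which one dominates. Writing $p := 1/2+\epsilon$, the assumption $\epsilon\in[0,1/2)$ gives $p\in[1/2,1)$, which is exactly the range in which the elementary $p$-series estimates below are valid.

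First I would dispose of the two easy terms. The last term is $R_1^2/\alpha(T) = R_1^2\,T^{p} = O(T^{1/2+\epsilon})$, and this turns out to be the dominant contribution. The third term is a pure geometric series in $t$, $\sum_{t=1}^T\theta_l^{t-1}\le (1-\theta_l)^{-1} = O(1)$, since $\theta_l<1$ by Lemma \ref{lem_graph}. For the first term $\sum_{t=1}^T\alpha(t-1)$ I would use the standard integral comparison: for $p\in[1/2,1)$, $\sum_{t=1}^T t^{-p}\le 1+\int_1^T x^{-p}\,dx \le \tfrac{1}{1-p}T^{1-p}+O(1) = O(T^{1/2-\epsilon})$, so this term is dominated by the last one. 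The harmless index shift from $\alpha(t-1)$ to $\alpha(t)$, together with the value assigned to $\alpha(0)$, only changes an additive constant and does not affect the order.

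The main work is the second (convolution) term $\sum_{t=1}^T\sum_{s=1}^{t-1}\theta_l^{t-1-s}\alpha(s-1)$, in which the geometric weights $\theta_l^{t-1-s}$ couple the two summation indices. The key step I would take is to interchange the order of summation, $\sum_{t=1}^T\sum_{s=1}^{t-1} = \sum_{s=1}^{T-1}\sum_{t=s+1}^T$, so that the inner sum decouples into a geometric series in $k=t-1-s$: $\sum_{t=s+1}^T\theta_l^{t-1-s} = \sum_{k=0}^{T-1-s}\theta_l^k \le (1-\theta_l)^{-1}$. This collapses the double sum to $(1-\theta_l)^{-1}\sum_{s=0}^{T-2}\alpha(s)$, i.e.\ the same $p$-series already estimated, so the second term is again $O(T^{1/2-\epsilon})$. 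Collecting the four estimates, the bound is governed by the $O(T^{1/2+\epsilon})$ term, which proves the claim; note that at $\epsilon=0$ the first and last terms are both $O(\sqrt{T})$, consistent with the stated $O(\sqrt{T})$ rate for $\alpha(t)=1/\sqrt{t}$.

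I expect the only real subtlety to be the convolution term: one must resist bounding the inner sum pointwise in $t$ (which would forfeit the summability of $\theta_l^k$ and leave a factor of $t$), and instead exploit the geometric decay through the summation swap, after which everything reduces to the same elementary $p$-series estimate used for the first term.
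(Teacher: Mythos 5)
Your proposal is correct and follows essentially the same route as the paper: the paper likewise handles the convolution term by exchanging the order of summation so that the geometric factor sums to $(1-\theta_l)^{-1}$, reducing everything to $\frac{1}{\alpha(T)} = T^{\frac{1}{2}+\epsilon}$ and $\sum_{t=1}^T\alpha(t-1) = O(T^{\frac{1}{2}-\epsilon})$. Your writeup is just a more detailed account of the same argument (the paper leaves the geometric and $p$-series estimates implicit), so there is nothing substantive to add.
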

\begin{proof}
	By exchanging the order of summation, for each $l = 1,2$,
	\begin{align}
		\sum_{t=1}^T\sum_{s=1}^{t-1}\theta_l^{t-1-s}\alpha(s-1)&\le \sum_{t=1}^T\sum_{s=0}^{T-1}\theta_l^s\alpha(t-1)\label{exchange_sum}\\
		&\le \frac{1}{1 - \theta_l}\sum_{t=1}^T\alpha(t-1).\notag
	\end{align}
	Thus, we obtain the result by noting that $\frac{1}{\alpha(T)} = T^{\frac{1}{2} + \epsilon}$ and $\sum_{t=1}^T\alpha(t-1)\le T^{\frac{1}{2} - \epsilon}$.
\end{proof}

Corollary 1 shows that the average pseudo regret of each local agent vanishes as $O(T^{-\frac{1}{2} + \epsilon})$, which indicates that agents can learn the optimal offline strategy merely using local information about the network. 
\subsection{Strongly Convex-Strongly Concave Case}
\cite{ss:07} showed that by using a mirror descent algorithm, the regret bound can be improved to $O(\log T)$ for online optimization problem with generalized strongly convex losses.

In this section, we extend this idea to network stochastic zero-sum games and establish a regret bound of order $O(\log T)$ for our D-SMD. In the following, we give the formal definition of the generalized strongly convex function. 
\begin{Def}[\cite{ss:07}]\label{strong_def}
	A function $f$ is $\eta$-strongly convex over $X$ with respect to a convex and differentiable function $\psi$ if for all $x,y\in X$,
	\[f(x) - f(y) - \langle x-y,\lambda\rangle\ge \eta D_{\psi}(x,y), \forall\lambda\in\partial f(y).\]
\end{Def}
As in \cite{ss:07}, we also need to select a specific step-size sequence depending on the strong convexity coefficient. We formulate the regret bound in this case in Theorem \ref{thm1_2}.
\begin{thm}\label{thm1_2}
	Let the cost function $f_{1,i}(x_1,x_2)$, $i\in\mathcal{V}_1$, be $\eta$-strongly convex in $x_1\in\mathcal{X}_1$ with respect to $\psi_1$ for any $x_2\in\mathcal{X}_2$ and Assumptions \ref{asm1}-\ref{asm3} hold. If $\alpha(t) = \frac{1}{\eta (t+1)}$, then the pseudo regret of D-SMD defined by \eqref{regret_def} is bounded by
	\begin{align}
		\bar{R}_1^{(i)}(T)&\le \sum_{l=1}^2(L + \nu_l)\Big(9L + \nu_l + \frac{4Ln_l\Gamma_l}{1 - \theta_l}\Big)(1 + \log(T))\notag\\
		&\quad + 4L\sum_{l=1}^2\frac{n_l\Gamma_l\Lambda_l\alpha(0)}{1 - \theta_l}.\label{regret_bound_final_1}
	\end{align}
\end{thm}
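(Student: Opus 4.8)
The plan is to retrace the per-round analysis that underlies Theorem \ref{thm1}, but to exploit the $\eta$-strong convexity of $f_{1,i}(\cdot,x_2)$ at the single place where only plain convexity was used, and then to choose the step-size so that the resulting Bregman-divergence terms telescope without leaving the growing residual $R_1^2/\alpha(T)$. First I would reproduce the basic single-step inequality. For a fixed comparator $x_1\in X_1$, the update $x_{1,i}(t+1)=P^1_{v_{1,i}(t)}(-\alpha(t)\hat g_{1,i}(t))$ together with the $1$-strong convexity of $\psi_1$ yields the standard three-point bound
\[ \alpha(t)\langle \hat g_{1,i}(t),\, v_{1,i}(t)-x_1\rangle \le D_{\psi_1}(x_1,v_{1,i}(t)) - D_{\psi_1}(x_1,x_{1,i}(t+1)) + \tfrac{\alpha(t)^2}{2}\|\hat g_{1,i}(t)\|_\ast^2. \]
Taking conditional expectations and invoking Assumption \ref{asm2} replaces $\hat g_{1,i}(t)$ by $g_{1,i}(t)$ in the inner product and controls the noise through $\nu_l$, while Assumption \ref{asm1} and Lemma \ref{lem_graph} account, exactly as in Theorem \ref{thm1}, for the gap between $x_{1,i}(t)$ and the running average $v_{1,i}(t)$, i.e. the consensus error that produces the geometric factors $\theta_l^{t-1-s}$.

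The decisive modification is at the convexity step. Instead of $U(v_{1,i}(t),u_{2,i}(t))-U(x_1,u_{2,i}(t))\le\langle g_{1,i}(t),v_{1,i}(t)-x_1\rangle$, Definition \ref{strong_def} gives the sharper estimate
\[ U(v_{1,i}(t),u_{2,i}(t))-U(x_1,u_{2,i}(t))\le\langle g_{1,i}(t),v_{1,i}(t)-x_1\rangle-\eta\, D_{\psi_1}(x_1,v_{1,i}(t)). \]
Substituting this into the single-step bound makes the coefficient of the incoming divergence equal to $\tfrac{1}{\alpha(t)}-\eta$. With the prescribed step-size $\alpha(t)=\tfrac1{\eta(t+1)}$ one has the exact identity $\tfrac1{\alpha(t)}-\eta=\eta t=\tfrac1{\alpha(t-1)}$, so that the accumulated Bregman contribution becomes $\sum_t\big(\tfrac1{\alpha(t-1)}D_{\psi_1}(x_1,v_{1,i}(t))-\tfrac1{\alpha(t)}D_{\psi_1}(x_1,x_{1,i}(t+1))\big)$, in which the coefficient entering step $t+1$ coincides with the one leaving step $t$. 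Using Assumption \ref{asm3} to pass from the incoming term $D_{\psi_1}(x_1,v_{1,i}(t+1))$ to the convex combination $\sum_j w_{1,ij}(t+1)D_{\psi_1}(x_1,x_{1,j}(t+1))$, and exploiting the doubly-stochastic structure of $W_1(t+1)$, lets this sum collapse; the divergent term $R_1^2/\alpha(T)$ appearing in \eqref{regret_bound_final} is thereby eliminated and only a bounded initial contribution survives.

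It then remains to sum the error terms against the new step-size. Each surviving gradient/noise term carries a factor $\alpha(t-1)=\tfrac1{\eta t}$, and the consensus terms additionally carry $\sum_{s=1}^{t-1}\theta_l^{t-1-s}\alpha(s-1)$; applying the exchange-of-summation estimate \eqref{exchange_sum} bounds the latter by $\tfrac{1}{1-\theta_l}\sum_t\alpha(t-1)$, while the harmonic estimate $\sum_{t=1}^T\tfrac1t\le 1+\log T$ produces the $(1+\log T)$ factor and the geometric initial-condition sum $\sum_t\theta_l^{t-1}\le\tfrac1{1-\theta_l}$ produces the constant term proportional to $\Lambda_l\alpha(0)$. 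Collecting the factors over $l=1,2$ yields \eqref{regret_bound_final_1}.

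I expect the main obstacle to be the distributed Bregman telescoping. Unlike the centralized case, the iterate leaving step $t$ is $x_{1,i}(t+1)$ whereas the quantity entering step $t+1$ is the network average $v_{1,i}(t+1)\ne x_{1,i}(t+1)$, so the telescope does not close agent-by-agent; making it close requires both the convexity of $D_{\psi_1}(x_1,\cdot)$ with Bregman reciprocity from Assumption \ref{asm3} and the column-stochasticity of the mixing matrices, combined with the exact coefficient identity $\tfrac1{\alpha(t)}-\eta=\tfrac1{\alpha(t-1)}$ forced by the step-size. If either ingredient fails, the residual $R_1^2/\alpha(T)$ reappears and the rate degrades from $O(\log T)$ back to the $O(\sqrt T)$ of Theorem \ref{thm1}.
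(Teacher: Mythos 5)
Your proposal is correct and follows essentially the same route as the paper: the paper likewise plugs $\alpha(t)=\frac{1}{\eta(t+1)}$ into the telescoped Bregman bound so that the difference coefficients become $\eta D_{\psi_1}(x_1,v_{1,i}(t))$, cancels these against the $-\eta D_{\psi_1}(x_1,v_{1,i}(t))$ supplied by Definition~\ref{strong_def} applied to $f_{1,i}$ (not $U$ itself, a minor notational slip on your part), reuses the consensus-error machinery of Lemma~\ref{lem2}, and finishes with the harmonic and geometric sums exactly as you describe. The distributed telescoping via Assumption~\ref{asm3} and double stochasticity of $W_1(t)$ that you flag as the main obstacle is handled in the paper in precisely the way you anticipate.
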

Comparing this result with Theorem \ref{thm1}, we see that the generalized strong convexity can eliminate the term $\frac{R_1^2}{\alpha(T)}$, which is the main factor restricting the regret order in the convex-concave setting.
\section{Guarantees on Convergence}
In general, \cite{bh:08} proved that a no-regret learning algorithm converges to a coarse correlated equilibrium, which is a relaxation of NE. In this section, we are interested in the convergence of D-SMD to a NE. 
\subsection{Convex-Concave Case}
For a two-person zero-sum game, we can directly derive the convergence rate of the time-averaged strategy profile $(\sum_{t=1}^Tx_1(t)/T,\sum_{t=1}^Tx_2(t)/T)$ from the established regret bound \cite{rs:13b}. However, we cannot apply this result to the two-network stochastic zero-sum game, since each network does not have access to the actual state of its adversarial network. In detail, the difficulty lies in that the pseudo regret is defined from the weighted estimates $u_{l,i}(t)$, and hence, the cumulative cost of $\Sigma_1$ \Big(i.e.,$\mathbb{E}\left[\sum_{t=1}^TU(x_{1,i}(t),u_{2,i}(t))\right]$\Big) is not able to offset the cumulative cost of $\Sigma_2$ \Big(i.e., $\mathbb{E}\left[\sum_{t=1}^T-U(u_{1,i}(t),x_{2,i}(t))\right]$\Big).

We now consider the following time-averaged iterates
\[\hat{x}_{l,i}(t) = \frac{1}{\sum_{s=0}^{t-1}\alpha(s)}\sum_{s=0}^{t-1}\alpha(s)x_{l,i}(s).\quad\text{for}\ t\ge 1, l=1,2\]
In order to measure the approximation quality of the average sequence, we define the gap function
\begin{align}
	&\quad\delta(\hat{x}_{1,i}(t),\hat{x}_{2,j}(t))\notag\\
	&:= \max_{x_2\in X_2}U(\hat{x}_{1,i}(t),x_2) - \min_{x_1\in X_1}U(x_1,\hat{x}_{2,j}(t)).\label{gap_function}
\end{align}
Our goal is to present an expected bound for this gap function. 
\begin{thm}\label{thm5}
	Let $f_{1,i}(\cdot,\cdot)$ be convex-concave and $f_{2,j}(\cdot,\cdot)$ be concave-convex for all $i\in\mathcal{V}_1$, $j\in\mathcal{V}_2$. Suppose that Assumptions \ref{asm1}-\ref{asm3} hold. Let $\{x_{1,i}(s)\}_{0\le s\le t-1}$ and $\{x_{2,j}(s)\}_{0\le s\le t-1}$ be the sequences generated by D-SMD. Then
	\begin{align*}
		\mathbb{E}\left[\delta(\hat{x}_{1,i}(t),\hat{x}_{2,j}(t))\right]&\le \frac{M_1 + M_2\sum_{s=0}^{t-1}\alpha^2(s)}{\sum_{s=0}^{t-1}\alpha(s)}, 
	\end{align*}
	where $M_1:= \sum_{l=1}^2\left(\frac{4Ln_l\Gamma_l\Lambda_l\alpha(0)}{1-\theta_l} + 2R_l^2\right)$ and $M_2 := \sum_{l=1}^2\Big(4L(L + \nu_l)\left(\frac{n_l\Gamma_l}{1 - \theta_l} + 2\right) + (L + \nu_l)^2 + \nu_l^2/2\Big)$.
\end{thm}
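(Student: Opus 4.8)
The plan is to reduce the gap function to a pair of $\alpha$-weighted cumulative cost differences, one per network, and then to run a stochastic mirror-descent estimate along the network-averaged trajectory, paying for the disagreement between each agent and its network average through Lemma \ref{lem_graph}.

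First, since $U$ is convex in $x_1$ and concave in $x_2$ and $\hat{x}_{l,i}(t)$ is a convex combination of $\{x_{l,i}(s)\}$ with weights $\alpha(s)/\sum_{s}\alpha(s)$, Jensen's inequality gives, with $S_t := \sum_{s=0}^{t-1}\alpha(s)$, $x_2^{\star} := \arg\max_{x_2}U(\hat x_{1,i}(t),x_2)$ and $x_1^{\star} := \arg\min_{x_1}U(x_1,\hat x_{2,j}(t))$,
\[\delta(\hat{x}_{1,i}(t),\hat{x}_{2,j}(t)) \le \frac{1}{S_t}\sum_{s=0}^{t-1}\alpha(s)\big[U(x_{1,i}(s),x_2^{\star}) - U(x_1^{\star},x_{2,j}(s))\big].\]
I would then split each summand around the network averages $\bar x_l(s):=\frac{1}{n_l}\sum_{k}x_{l,k}(s)$, writing the bracket as the network-averaged gap $U(\bar x_1(s),x_2^{\star})-U(x_1^{\star},\bar x_2(s))$ plus two residuals controlled by Assumption \ref{asm1} (Lipschitz continuity) and the consensus distances $\|x_{1,i}(s)-\bar x_1(s)\|$, $\|x_{2,j}(s)-\bar x_2(s)\|$.

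Second, I would bound the network-averaged gap by first-order inequalities: convexity of $U$ in $x_1$ and concavity in $x_2$ turn $U(\bar x_1(s),x_2^{\star})-U(x_1^{\star},\bar x_2(s))$ into inner products against subgradients at the averaged play, which are matched to the sampled local subgradients $\hat g_{l,i}(s)$ produced by the algorithm; the evaluation-point mismatch (averaged state versus the local estimates $v_{l,i}(s)$, $u_{3-l,i}(s)$) is again absorbed by Assumption \ref{asm1} and Lemma \ref{lem_graph}. The prox-mapping \eqref{prox_mapping} then supplies the three-point inequality
\[\alpha(s)\langle \hat g_{l,i}(s),\, v_{l,i}(s)-x\rangle \le D_{\psi_l}(x,v_{l,i}(s)) - D_{\psi_l}(x,x_{l,i}(s+1)) + \tfrac{1}{2}\alpha^2(s)\|\hat g_{l,i}(s)\|_{\ast}^2.\]
Using convexity of the Bregman divergence in its second argument (Assumption \ref{asm3}) to pass $D_{\psi_l}(x,v_{l,i}(s+1))\le\sum_k w_{l,ik}(s+1)D_{\psi_l}(x,x_{l,k}(s+1))$, the right-hand side telescopes at the network level; the surviving Bregman terms are bounded by $R_l^2$, giving the $2R_l^2$ contributions to $M_1$, while the $\tfrac12\alpha^2(s)\|\hat g_{l,i}(s)\|_{\ast}^2$ terms feed the $\alpha^2$-part $M_2$.

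Third, I would take expectations. Assumption \ref{asm2} makes $\hat g_{l,i}(s)$ conditionally unbiased for $g_{l,i}(s)$ given $\mathcal{F}_s$, so the noise cross-terms vanish, while $\mathbb{E}[\|\hat g_{l,i}(s)\|_{\ast}^2\mid\mathcal{F}_s]\le (L+\nu_l)^2$ (bounded true subgradient plus variance $\nu_l^2$) produces the $(L+\nu_l)^2+\nu_l^2/2$ entries of $M_2$. Finally, Lemma \ref{lem_graph} controls every consensus distance by $\Gamma_l\sum_{\tau<s}\theta_l^{s-1-\tau}\alpha(\tau)$ plus a geometrically decaying initial-disagreement term $\Gamma_l\theta_l^{s}\Lambda_l$; summing the former against $\alpha(s)$ and exchanging summation as in \eqref{exchange_sum} yields the $\frac{n_l\Gamma_l}{1-\theta_l}$ factors multiplying $\sum_s\alpha^2(s)$ in $M_2$, and the initial term summed geometrically gives $\frac{4Ln_l\Gamma_l\Lambda_l\alpha(0)}{1-\theta_l}$ in $M_1$; dividing by $S_t$ gives the claim. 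I expect the main obstacle to be the bookkeeping of the second step: simultaneously bridging each agent's sampled subgradient of $f_{l,i}$ to the aggregate subgradient of $U=\frac1{n_l}\sum_k f_{l,k}$ at the network average, arranging the three-point inequalities so that the Bregman terms genuinely telescope under time-varying doubly stochastic mixing, and handling the two networks symmetrically so the averaged cross terms cancel; the stochastic part is comparatively routine once $\mathcal{F}_s$ is used correctly.
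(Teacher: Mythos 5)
Your overall architecture — Jensen's inequality on the $\alpha$-weighted average, splitting around the network averages $\bar x_l(s)$, the three-point prox inequality with telescoping under the doubly stochastic mixing, and the consensus bounds feeding $M_1$ and $M_2$ — matches the paper's proof. But there is one genuine gap in your third step, and it is precisely the step you dismiss as ``comparatively routine.'' You claim that because $\hat g_{l,i}(s)$ is conditionally unbiased given $\mathcal{F}_s$, the noise cross-terms $\langle g_{l,i}(s)-\hat g_{l,i}(s),\, v_{l,i}(s)-x_l\rangle$ vanish in expectation. That argument requires the comparator $x_l$ to be deterministic (or at least $\mathcal{F}_s$-measurable). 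Here it is not: the gap function puts the $\max_{x_2}$ and $\min_{x_1}$ \emph{inside} the expectation, so the relevant comparators are $x_2^{\star}=\arg\max_{x_2}U(\hat x_{1,i}(t),x_2)$ and $x_1^{\star}=\arg\min_{x_1}U(x_1,\hat x_{2,j}(t))$, which depend on the entire random trajectory up to time $t$ and hence on noise realized \emph{after} step $s$. The tower-property cancellation in \eqref{condition_unbias} therefore does not apply, and the term $\mathbb{E}\bigl[\max_{x_l}\sum_s\langle \alpha(s)(g_{l,i}(s)-\hat g_{l,i}(s)),\, v_{l,i}(s)-x_l\rangle\bigr]$ must be bounded rather than annihilated. (Contrast this with the pseudo-regret of Theorem \ref{thm1}, where the $\min$ is taken outside the expectation and the comparator is deterministic, so your argument would be fine there.)

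The paper closes this gap with the standard ghost-iterate device of \cite{nj:09}: it introduces the auxiliary sequence $\hat v_{l,i}(t)=P^l_{\hat v_{l,i}(t-1)}(\alpha(t)(g_{l,i}(t)-\hat g_{l,i}(t)))$, splits $v_{l,i}(s)-x_l = (v_{l,i}(s)-\hat v_{l,i}(s)) + (\hat v_{l,i}(s)-x_l)$, kills the first piece by conditional unbiasedness (both factors are now $\mathcal{F}_s$-adapted, see \eqref{ergo_upper_bound_3}), and bounds the second piece pathwise and uniformly in $x_l$ by running mirror descent on the noise, which costs an extra $n_lR_l^2 + \tfrac{n_l\nu_l^2}{2}\sum_s\alpha^2(s)$ as in \eqref{ergo_upper_bound_2}. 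This is exactly where the factor $2$ in the $2R_l^2$ of $M_1$ and the separate $\nu_l^2/2$ in $M_2$ come from; your accounting attributes $\nu_l^2/2$ to the second-moment bound on $\|\hat g_{l,i}(s)\|_{\ast}$, but that bound already yields the $(L+\nu_l)^2$ term, and nothing in your argument produces the doubling of $R_l^2$. So the missing idea is the measurability obstruction for random comparators and its resolution via the auxiliary prox sequence; the rest of your outline is sound and coincides with the paper's route.
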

We remark that the gap measure in Theorem \ref{thm5} is also used to describe the generalization property of an empirical solution in stochastic saddle point problems \cite{ly:21,zh:21}. It was shown by \cite{ah:21} that $(\hat{x}_{1,i}(t),\hat{x}_{2,j}(t))$ is an $\epsilon$-equilibrium of the two-network stochastic zero-sum game when $\delta(\hat{x}_{1,i}(t),\hat{x}_{2,j}(t))\le \epsilon$. Our result matches the error bound of SMD for saddle point problems \cite{nj:09} with the constants $M_1$ and $M_2$ affected by the structure of the networks.

Since the NE is unique when the cost function is strongly convex-strongly concave, we may transform the expected error bound of Theorem \ref{thm5} into the classical mean-squared error of $(\hat{x}_{1,i}(t),\hat{x}_{2,j}(t))$ in this case.
\begin{Col}\label{col2}
	Suppose that $U(\cdot,\cdot)$ is $\mu$-strongly convex-strongly concave and Assumptions \ref{asm1}-\ref{asm3} hold, and let $\{x_{1,i}(s)\}_{0\le s\le t-1}$ and $\{x_{2,j}(s)\}_{0\le s\le t-1}$ be generated by D-SMD. If $(x_1^{\ast},x_2^{\ast})$ denotes the NE, then
	\begin{align*}
		&\quad\mathbb{E}[\|\hat{x}_{1,i}(t) - x_1^{\ast}\|^2 +\|\hat{x}_{2,j}(t) - x_2^{\ast}\|^2]\\
		&\le \frac{2}{\mu}\frac{M_1 + M_2\sum_{s=0}^{t-1}\alpha^2(s)}{\sum_{s=0}^{t-1}\alpha(s)}, 
	\end{align*}
	where $M_1$ and $M_2$ are as defined in Theorem \ref{thm5}.
\end{Col}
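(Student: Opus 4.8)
The plan is to reduce Corollary \ref{col2} to Theorem \ref{thm5} through a single pointwise (sample-wise) inequality that bounds the squared distance to the NE by the gap function. Concretely, I would first prove that for any feasible pair $(\hat{x}_1,\hat{x}_2)\in X_1\times X_2$,
\[
\frac{\mu}{2}\left(\|\hat{x}_1 - x_1^{\ast}\|^2 + \|\hat{x}_2 - x_2^{\ast}\|^2\right)\le \delta(\hat{x}_1,\hat{x}_2),
\]
then specialize to $(\hat{x}_1,\hat{x}_2)=(\hat{x}_{1,i}(t),\hat{x}_{2,j}(t))$, take expectations, and substitute the bound on $\mathbb{E}[\delta]$ from Theorem \ref{thm5}. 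Since the stated right-hand side of the corollary is exactly $\tfrac{2}{\mu}$ times the Theorem \ref{thm5} bound, the conclusion is immediate once this inequality is established.

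To obtain the pointwise inequality, I would first use the Nash equilibrium (saddle-point) property of $(x_1^{\ast},x_2^{\ast})$ to discard the max and min in the gap function \eqref{gap_function}: since $\max_{x_2\in X_2}U(\hat{x}_1,x_2)\ge U(\hat{x}_1,x_2^{\ast})$ and $\min_{x_1\in X_1}U(x_1,\hat{x}_2)\le U(x_1^{\ast},\hat{x}_2)$, we get $\delta(\hat{x}_1,\hat{x}_2)\ge U(\hat{x}_1,x_2^{\ast}) - U(x_1^{\ast},\hat{x}_2)$. I would then insert the equilibrium value $U(x_1^{\ast},x_2^{\ast})$ to split the right-hand side as
\[
U(\hat{x}_1,x_2^{\ast}) - U(x_1^{\ast},\hat{x}_2) = \big(U(\hat{x}_1,x_2^{\ast}) - U(x_1^{\ast},x_2^{\ast})\big) + \big(U(x_1^{\ast},x_2^{\ast}) - U(x_1^{\ast},\hat{x}_2)\big).
\]
The definition of the Nash equilibrium says precisely that $x_1^{\ast}=\arg\min_{x_1\in X_1}U(x_1,x_2^{\ast})$ and $x_2^{\ast}=\arg\max_{x_2\in X_2}U(x_1^{\ast},x_2)$. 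Combining the first-order optimality condition at each constrained optimum with the $\mu$-strong convexity of $U(\cdot,x_2^{\ast})$ and the $\mu$-strong concavity of $U(x_1^{\ast},\cdot)$ yields the quadratic-growth bounds $U(\hat{x}_1,x_2^{\ast}) - U(x_1^{\ast},x_2^{\ast})\ge \tfrac{\mu}{2}\|\hat{x}_1 - x_1^{\ast}\|^2$ and $U(x_1^{\ast},x_2^{\ast}) - U(x_1^{\ast},\hat{x}_2)\ge \tfrac{\mu}{2}\|\hat{x}_2 - x_2^{\ast}\|^2$; summing these gives the desired inequality.

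The only genuine obstacle is justifying these two quadratic-growth estimates rigorously in the constrained (and possibly nonsmooth) setting. For a $\mu$-strongly convex $h$ whose constrained minimizer over a convex set is $x^{\ast}$, there exists a subgradient $g\in\partial h(x^{\ast})$ satisfying $\langle g, x - x^{\ast}\rangle\ge 0$ for every feasible $x$; substituting this into the strong-convexity inequality $h(x)\ge h(x^{\ast}) + \langle g, x - x^{\ast}\rangle + \tfrac{\mu}{2}\|x - x^{\ast}\|^2$ annihilates the inner-product term and leaves exactly the growth bound. I would apply this with $h(\cdot)=U(\cdot,x_2^{\ast})$ and with $h(\cdot)=-U(x_1^{\ast},\cdot)$, noting that strong concavity of $U$ in its second argument is strong convexity of $-U$. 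Everything after the pointwise inequality is routine: take expectations over the randomness in the D-SMD iterates and invoke Theorem \ref{thm5}.
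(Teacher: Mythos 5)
Your proposal is correct and follows essentially the same route as the paper: lower-bound the gap function by $U(\hat{x}_{1,i}(t),x_2^{\ast}) - U(x_1^{\ast},\hat{x}_{2,j}(t))$, insert the equilibrium value $U(x_1^{\ast},x_2^{\ast})$, apply the quadratic-growth bounds from $\mu$-strong convexity/concavity together with the first-order optimality of the NE to kill the subgradient term, and then invoke Theorem \ref{thm5}. Your explicit justification of the quadratic-growth estimates via the variational inequality at the constrained optimum is slightly more careful than the paper's one-line version, but it is the same argument.
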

This corollary implies that when $\{\alpha(t)\}_{t\ge 0}$ satisfies $\sum_{t=0}^{\infty}\alpha(t) = \infty$ and $\sum_{t=0}^{\infty}\alpha^2(t) < \infty$, $(\hat{x}_{1,i}(t),\hat{x}_{2,j}(t))$ converges in mean square to the unique NE for all $i\in\mathcal{V}_1$, $j\in\mathcal{V}_2$. 

\subsection{Strictly Convex-Strictly Concave Case}
We now study the almost sure convergence of the strategy profile generated by D-SMD in the strictly convex-strictly concave setting. This usually requires an analysis tool quite different from that of the time-averaged sequence \cite{mz:19}.
\begin{thm}\label{thm2}
	Let $U(\cdot,\cdot)$ be strictly convex-strictly concave and Assumptions \ref{asm1}-\ref{asm3} hold. If the step-size sequence satisfies $\sum_{t=1}^{\infty}\alpha(t) = \infty$ and $\sum_{t=1}^{\infty}\alpha^2(t) < \infty$, then D-SMD almost surely converges to the unique NE, denoted by $x^{\ast} = (x_1^{\ast},x_2^{\ast})$, i.e., with probability $1$,
	\[\lim_{t\to\infty}x_{1,i}(t) = x_1^{\ast},\quad \lim_{t\to\infty}x_{2,j}(t) = x_2^{\ast},\ \forall i\in\mathcal{V}_1, j\in\mathcal{V}_2.\]
\end{thm}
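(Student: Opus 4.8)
The plan is to prove almost sure convergence via a stochastic Lyapunov (almost supermartingale) argument based on the Robbins--Siegmund theorem, combining two ingredients: an asymptotic consensus result showing that all local estimates within each subnetwork agree in the limit, and a one-step Bregman descent inequality whose drift term is a nonnegative saddle-point gap that vanishes only at the unique NE. Throughout I would work with the aggregate energy
\[
V(t) := \sum_{i=1}^{n_1} D_{\psi_1}(x_1^{\ast}, x_{1,i}(t)) + \sum_{j=1}^{n_2} D_{\psi_2}(x_2^{\ast}, x_{2,j}(t)),
\]
which by $1$-strong convexity of each $\psi_l$ dominates $\tfrac{1}{2}\sum_{i}\|x_{1,i}(t)-x_1^{\ast}\|^2 + \tfrac{1}{2}\sum_{j}\|x_{2,j}(t)-x_2^{\ast}\|^2$, so that $V(t)\to 0$ is exactly the desired conclusion.

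First I would establish consensus. Using Lemma~\ref{lem_graph}, the transition matrices $\Phi_l(t,s)$ approach uniform averaging at the geometric rate $\Gamma_l\theta_l^{t-s}$, so the disagreement $\|x_{l,i}(t)-\bar{x}_l(t)\|$ between a local estimate and the network average $\bar{x}_l(t):=\tfrac{1}{n_l}\sum_k x_{l,k}(t)$ is bounded (as in the proof of Theorem~\ref{thm1}) by a convolution $\sum_{s}\Gamma_l\theta_l^{t-1-s}\alpha(s)(L+\nu_l)$ plus a vanishing initial term. Since $\alpha(t)\to 0$, this disagreement tends to $0$; moreover, after exchanging the order of summation as in the proof of Corollary~\ref{col1}, the weighted disagreement $\sum_t \alpha(t)\|x_{l,i}(t)-\bar{x}_l(t)\|$ is summable because it is controlled by $\sum_t \alpha^2(t)<\infty$. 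The same bound governs the cross-network estimate $u_{3-l,i}(t)$, which I would compare to $\bar{x}_{3-l}(t)$.

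The core step is the descent inequality. Applying the three-point identity for the prox-mapping \eqref{prox_mapping} at each agent, using convexity of the Bregman divergence in its second argument (Assumption~\ref{asm3}) together with Jensen's inequality to pass from the mixed iterate $v_{l,i}(t)$ back to $V(t)$, and invoking Assumption~\ref{asm2} to replace $\hat{g}_{l,i}(t)$ by its conditional mean $g_{l,i}(t)$ (the martingale-difference term vanishing under $\mathbb{E}[\cdot\mid\mathcal{F}_t]$, the variance contributing an $\alpha^2(t)\nu_l^2$ term), I would arrive at
\[
\mathbb{E}[V(t+1)\mid\mathcal{F}_t] \le V(t) - \alpha(t)\,g(t) + \alpha^2(t)\,C + \varepsilon(t).
\]
Here the crucial drift $g(t)$ is the saddle-point gap $U(\bar{x}_1(t),x_2^{\ast}) - U(x_1^{\ast},\bar{x}_2(t))$, which is nonnegative by the convex--concave subgradient inequality and the saddle-point property of $(x_1^{\ast},x_2^{\ast})$; $C$ aggregates the Lipschitz (Assumption~\ref{asm1}) and variance constants; and $\varepsilon(t)$ collects the consensus and cross-network estimation errors, which by the previous paragraph satisfy $\sum_t \varepsilon(t)<\infty$ almost surely. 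The skew-symmetric coupling between the two networks is what makes this work: the subgradient contributions of $\Sigma_1$ and $\Sigma_2$ combine so that only the single gap $g(t)$ survives rather than two uncontrolled terms.

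With this inequality, the Robbins--Siegmund theorem yields, almost surely, that $V(t)$ converges and that $\sum_t \alpha(t) g(t) < \infty$. Since $\sum_t \alpha(t)=\infty$, it follows that $\liminf_t g(t)=0$, so along a subsequence $\bar{x}_1(t_k)\to x_1^{\ast}$ and $\bar{x}_2(t_k)\to x_2^{\ast}$ by strict convexity--strict concavity (the gap is strictly positive off the unique NE). Bregman reciprocity (Assumption~\ref{asm3}) then forces $V(t_k)\to 0$; as $V(t)$ converges, its limit must be $0$, whence $V(t)\to 0$ and, by strong convexity of the regularizers together with the consensus bound, $x_{l,i}(t)\to x_l^{\ast}$ for every agent. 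I expect the main obstacle to be the descent inequality itself: because each network evaluates its subgradient at the \emph{estimated} adversary state $u_{3-l,i}(t)$ rather than the true one, and because these estimates are agent-dependent, one must carefully track the cross-network errors and show they are absorbed into the summable remainder $\varepsilon(t)$ without corrupting the sign of the gap $g(t)$; establishing this, rather than the final supermartingale bookkeeping, is where the real difficulty lies.
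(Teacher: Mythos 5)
Your proposal is correct and follows essentially the same route as the paper's proof: the same Bregman--Lyapunov function, the same one-step descent inequality with the nonnegative gap $U(\bar{x}_1(t),x_2^{\ast})-U(x_1^{\ast},\bar{x}_2(t))$ as drift, summable consensus errors controlled by $\sum_t\alpha^2(t)<\infty$, the Robbins--Siegmund theorem, subsequence extraction via $\sum_t\alpha(t)=\infty$ together with uniqueness of the NE under strict convexity--concavity, and Bregman reciprocity to upgrade subsequential convergence of $V$ to $V(t)\to 0$. The only detail you elide is that the consensus bounds are stated in expectation, so one passes to almost-sure summability of the weighted errors via the monotone convergence theorem before invoking Robbins--Siegmund, exactly as the paper does.
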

We conclude from Corollary \ref{col1} and Theorem \ref{thm2} that for strongly convex-strongly concave network stochastic zero-sum games, D-SMD is a no-regret learning process that converges to the NE when $\sum_{t=0}^{\infty}\alpha(t) = \infty$ and $\sum_{t=0}^{\infty}\alpha^2(t) < \infty$. Moreover, this result generalizes Theorem 6 of \cite{srivastava2013distributed}, which is only applicable to a deterministic distributed saddle point problem. 

\section{Numerical Results}
In this section, we conduct numerical experiments for a network version of the two-person stochastic matrix games \cite{zh:21} to evaluate the performance of the proposed D-SMD algorithm in convex-concave and strongly convex-strongly concave cases.

For the above two cases, we set the number of players in both networks to be $N = 12$ and let each player have $K=20$ actions to choose from. We focus on studying the influence of the step-size sequence and the network topology on the regret bound and convergence of D-SMD. Specifically, we fix the structure of $\Sigma_2$ and consider three types of graphs with different degree of connectivity (Cycle graph $<$ Random graph $<$ Complete graph) for $\Sigma_1$ in our simulations.
\begin{itemize}
	\item {\it Cycle graph} has a single cycle and each node has exactly two immediate neighbors.
	\item {\it Random graph} is constructed by connecting nodes randomly and each edge is included in the graph with probability $0.7$ independent from every other edge.
	\item {\it Complete graph} is constructed by connecting all of the node pairs.                                                      
\end{itemize} 

\subsection{Convex-Concave Case}
Consider the following network stochastic zero-sum game
\[\min_{x_1\in\Delta_K}\max_{x_2\in\Delta_K}U(x_1,x_2) :=\frac{1}{N}\sum_{i=1}^Nx_1^T\mathbb{E}_{\xi}[A_{\xi}^i]x_2\]
where $x_1$ and $x_2$ are the mixed strategies of players in $\Sigma_1$ and $\Sigma_2$, respectively, which belong to the simplex $\Delta_K:=\{z\in\mathbb{R}^K: z\ge 0, \textbf{1}^Tz = 1\}$. $A_{\xi}^i$ is the stochastic cost matrix of player $i\in\Sigma_1$. Let $\{x_{1,i}(t)\}_{t\ge 0}$ and $\{x_{2,i}(t)\}_{t\ge 0}$ be the outputs of D-SMD. Recalling the gap function \eqref{gap_function}, we use its average with respect to all players, defined as $\bar{\delta}(t):= \frac{1}{N^2}\sum_{i=1}^N\sum_{j=1}^N\delta(\hat{x}_{1,i}(t),\hat{x}_{2,j}(t))$, to demonstrate the convergence of D-SMD.

In the first experiment, we run D-SMD from $t=1$ to $t=500$ for different learning rates $\alpha(t) = t^{-\frac{1}{2}}, t^{-\frac{2}{3}}, t^{-\frac{3}{4}}$ and estimate the expected average gap $\mathbb{E}[\bar{\delta}(t)]$ and the time-averaged pseudo regret $\bar{R}_1^{(i)}(t)/t$ by averaging across $50$ sample paths. The empirical results are shown in Figure \ref{fig1}, which shows that D-SMD can achieve sublinear regret bound and the slower learning rate yields a better regret rate and a faster convergence rate. These are consistent with our theoretical results in the convex-concave case. 

In the second experiment, we use the learning rate $\alpha(t) = 1/\sqrt{t}$ and compare the expected average gap and average pseudo regret with different network topologies in Figure \ref{fig2}, which demonstrates that the network topology has only a slight influence on the regret rate and convergence rate. 

\begin{figure}[h]
	\centering
	%\vspace{.3in}
	%\setlength{\abovecaptionskip}{-78pt}
	%\subfigbottomskip = -2pt
	%\subfigcapskip=-100pt
	\subfigure[]{\includegraphics[width = 0.48\columnwidth]{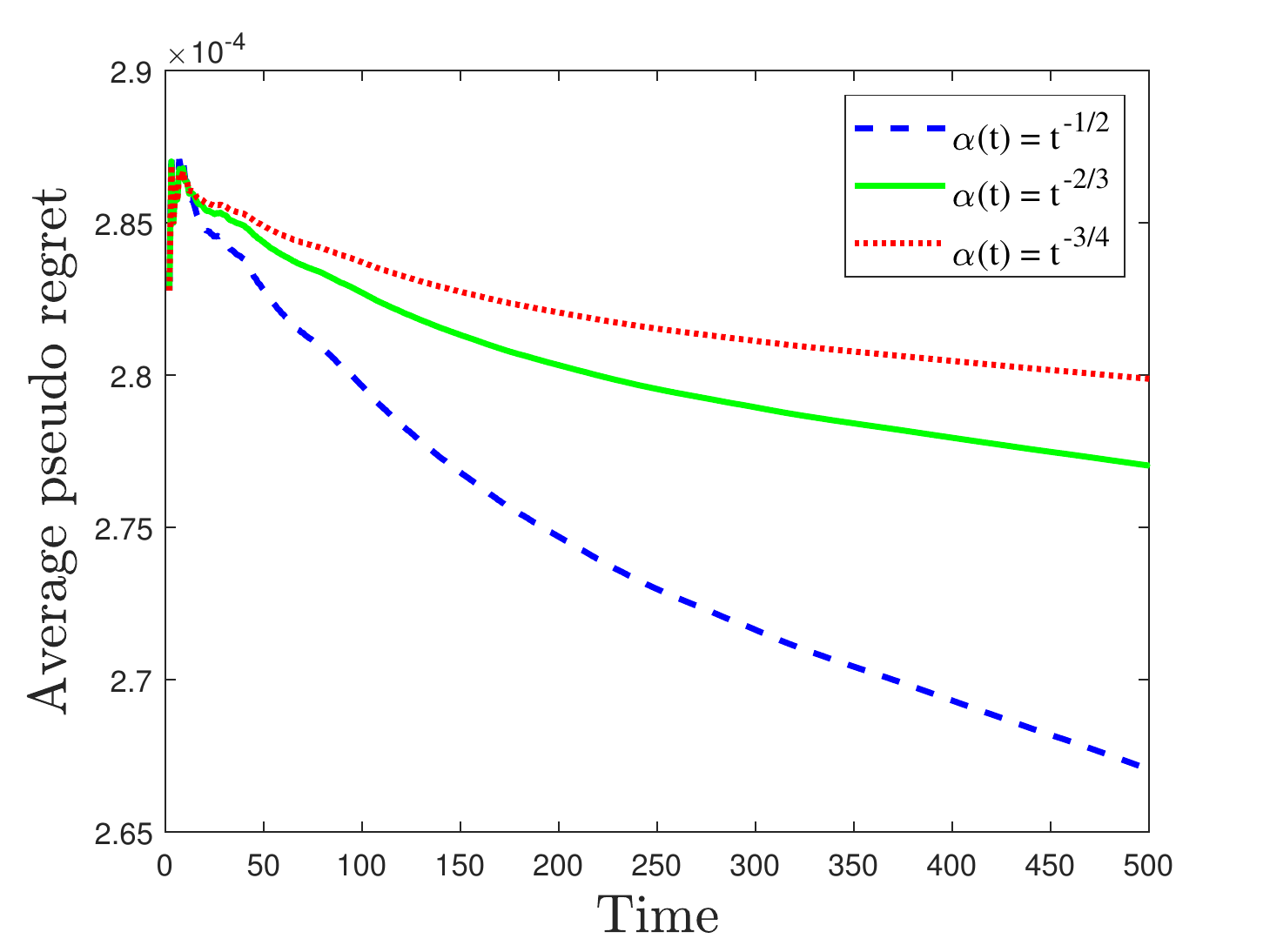}}
	\subfigure[]{\includegraphics[width = 0.48\columnwidth]{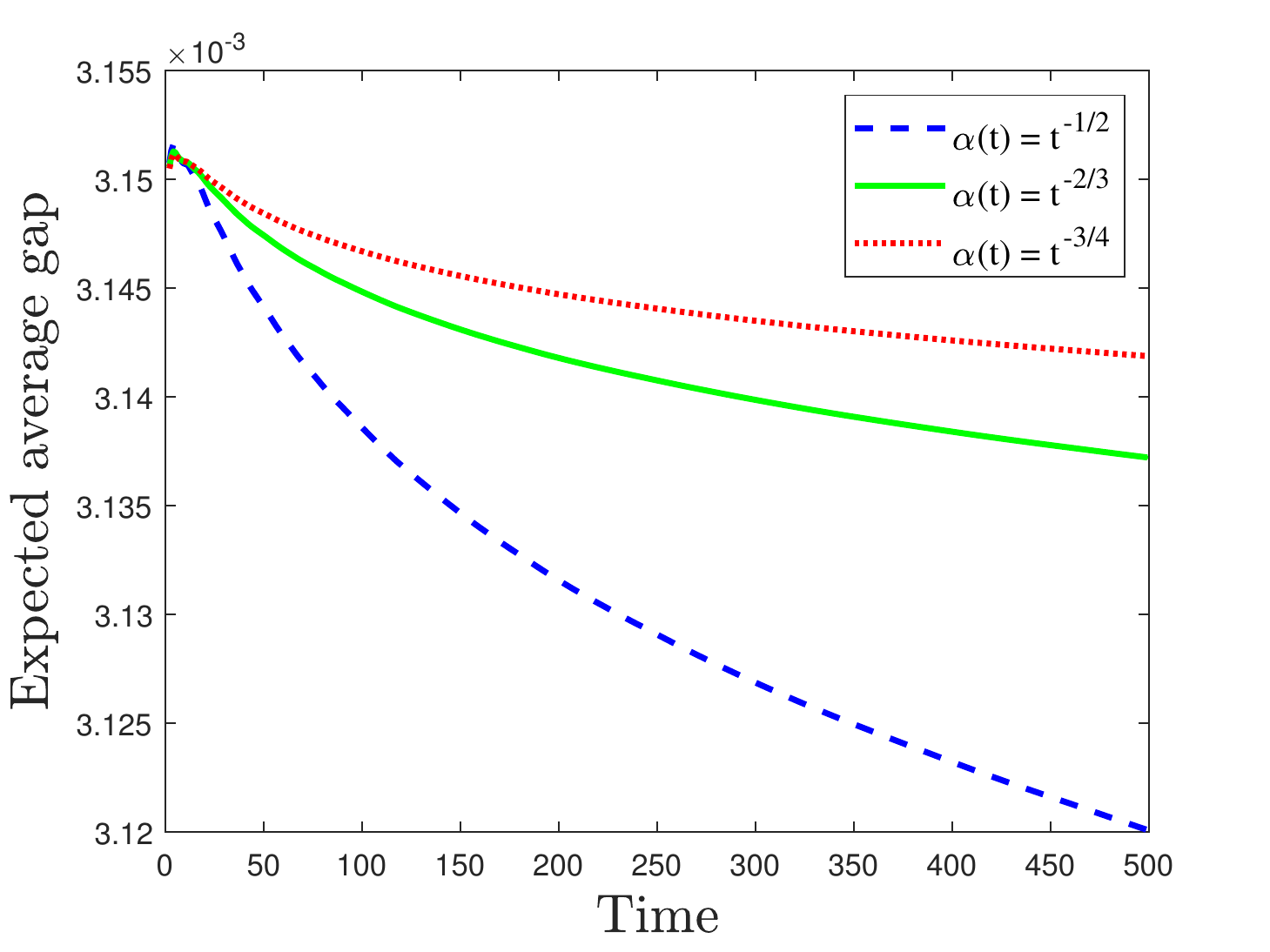}}%\vspace{-6.5cm}
	%	\subfigure[]{\includegraphics[width = 0.45\textwidth]{regret_convex_network_type2.eps}}
	%	\subfigure[]{\includegraphics[width = 0.45\textwidth]{convergence_convex_network_type2.eps}}
	%\vspace{.3in}
	\caption{Average pseudo regret and expected average gap and  of D-SMD under different learning rates in the convex-concave case}
	\label{fig1}
\end{figure}
\begin{figure}[h]
	\centering
	%\vspace{.3in}
	%\setlength{\abovecaptionskip}{-78pt}
	%\subfigbottomskip = -2pt
	%\subfigcapskip=-100pt
	\subfigure[]{\includegraphics[width = 0.48\columnwidth]{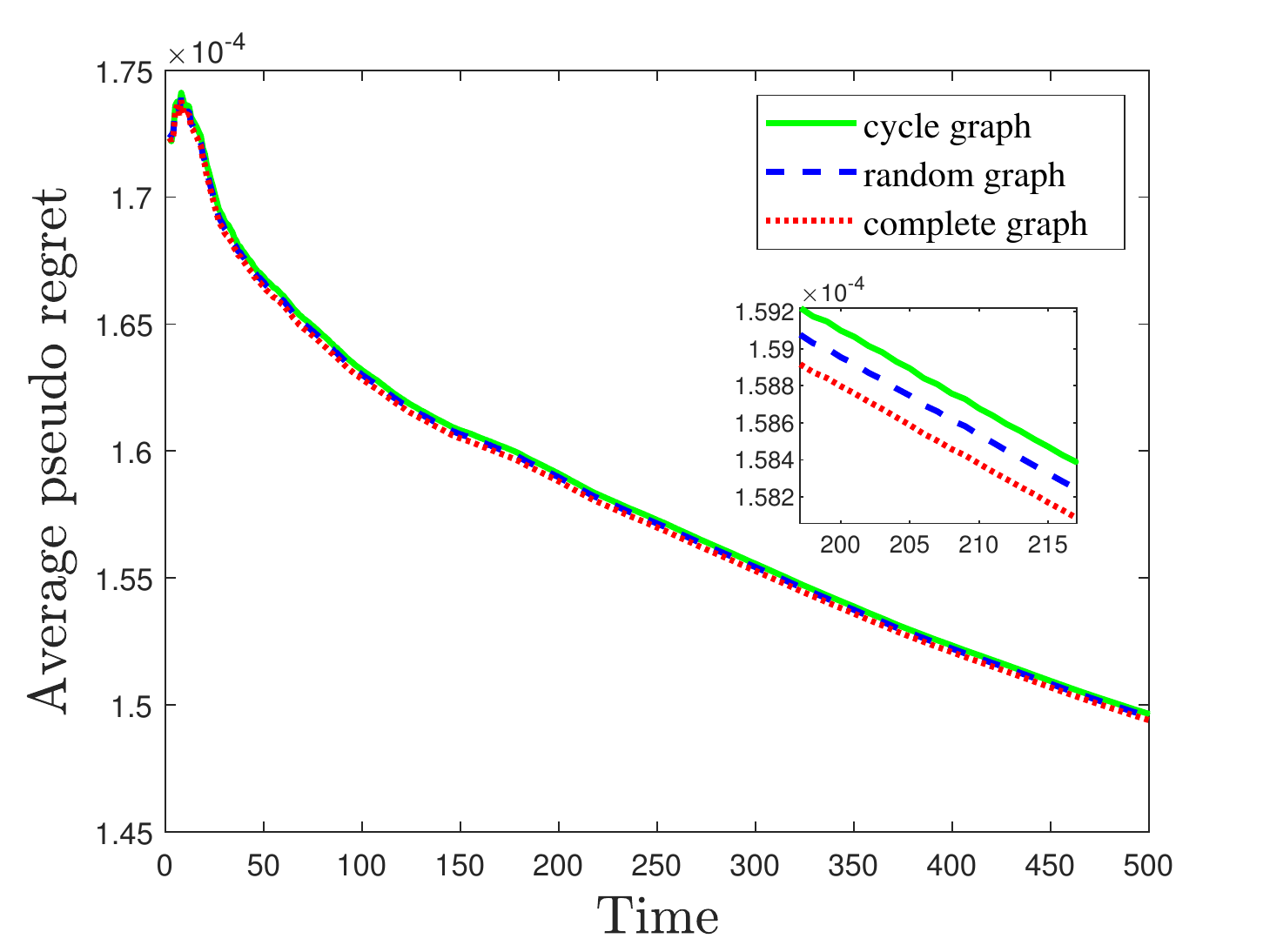}}
	%\subfigure[]{\includegraphics[width = 0.48\columnwidth]{convergence_convex_network_1008.eps}}
	\subfigure[]{\includegraphics[width = 0.48\columnwidth]{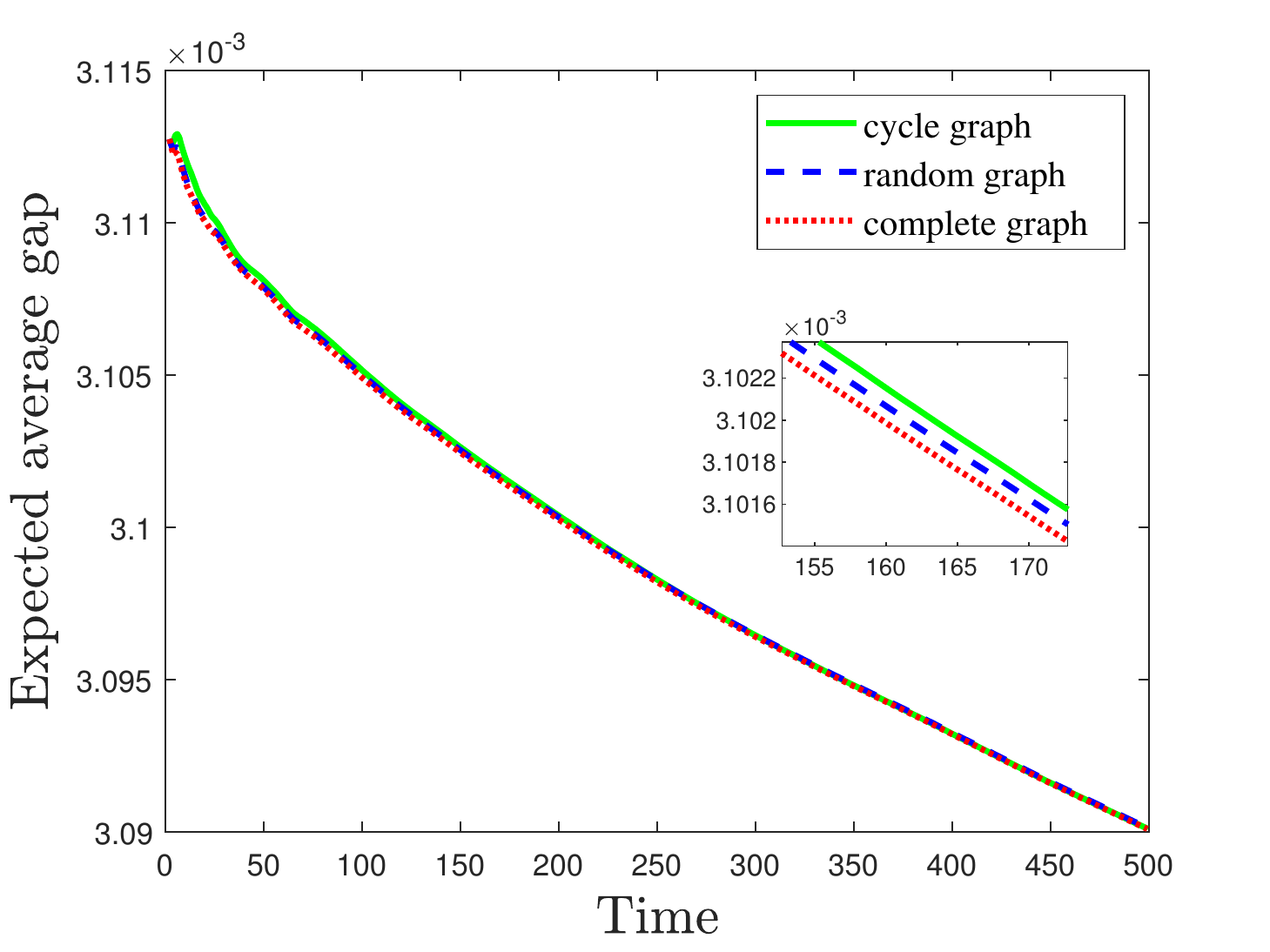}}%\vspace{-6.5cm}
	%	\subfigure[]{\includegraphics[width = 0.45\textwidth]{regret_convex_network_type2.eps}}
	%	\subfigure[]{\includegraphics[width = 0.45\textwidth]{convergence_convex_network_type2.eps}}
	%\vspace{.3in}
	\caption{Average pseudo regret and expected average gap and  of D-SMD under different network topologies in the convex-concave case}
	\label{fig2}
\end{figure}
\subsection{Strongly Convex-Strongly Concave Case}
To investigate the performance of D-SMD in the strongly convex-strongly concave case, we consider the regularized network stochastic zero-sum game with cost function defined as follows
\begin{align*}
	U(x_1,x_2)
	&:= \sum_{p=1}^Kx_1^{(p)}\log x_1^{(p)} + \frac{1}{N}\sum_{i=1}^Nx_1^T\mathbb{E}_{\xi}[A_{\xi}^i]x_2\\
	&\quad - \sum_{p=1}^Kx_2^{(p)}\log x_2^{(p)}.
\end{align*}  
Notice that $U(x_1,x_2)$ is $1$-strongly convex-strongly concave with respect to the regularizer $\psi(x) = \sum_{p=1}^Kx^{(p)}\log x^{(p)}$. We use a dual averaging algorithm proposed in \cite{m:19} to compute the Nash equilibrium, denoted by $(x_1^{\ast},x_2^{\ast})$. Due to the uniqueness of the NE in the strongly convex-strongly concave case, we consider the average absolute error $\bar{\delta}'(t):=\frac{1}{N}\sum_{i=1}^N\|x_{1,i}(t) - x_1^{\ast}\| + \|x_{2,i}(t) - x_2^{\ast}\|$ to illustrate the convergence of the sequences $\{(x_{1,i}(t),x_{2,i}(t))\}_{t\ge 0}$. The time-averaged pseudo regret $\bar{R}_1^{(i)}(t)/t$ and the expected absolute error $\mathbb{E}[\bar{\delta}'(t)]$ under different learning rates are plotted in Figure 3, and the performance under different network topologies with learning rate $\alpha(t) = 1/t$ are displayed in Figure 4. D-SMD produces a better regret rate in the strongly convex-strongly concave case and the influence of network topology and learning rate is similar to the convex-conacve case. 
\begin{figure}[h]
	\centering
	%\setlength{\abovecaptionskip}{-78pt}
	%\subfigbottomskip = -2pt
	%\subfigcapskip=-100pt
	\subfigure[]{\includegraphics[width = 0.48\columnwidth]{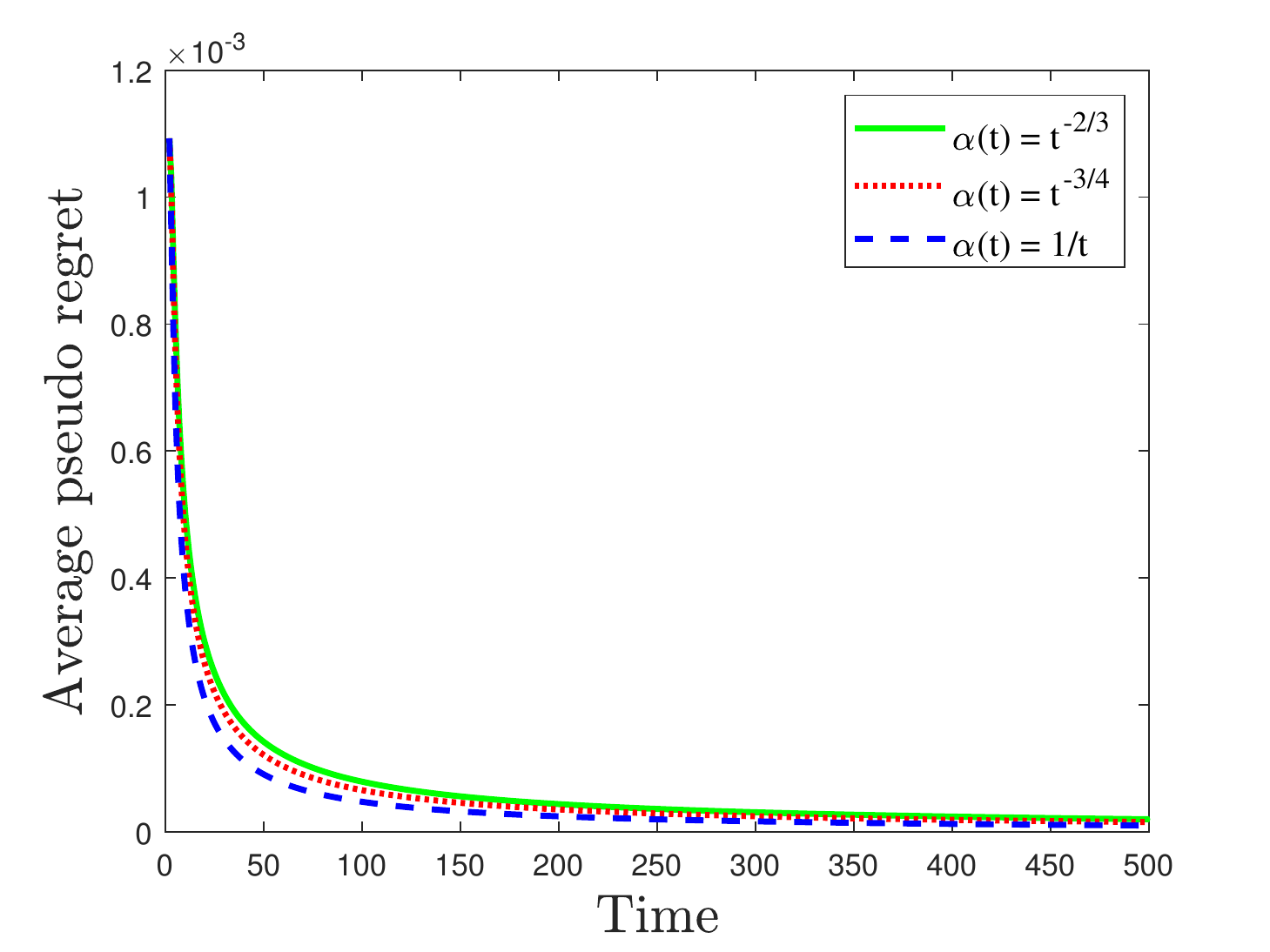}}
	\subfigure[]{\includegraphics[width = 0.48\columnwidth]{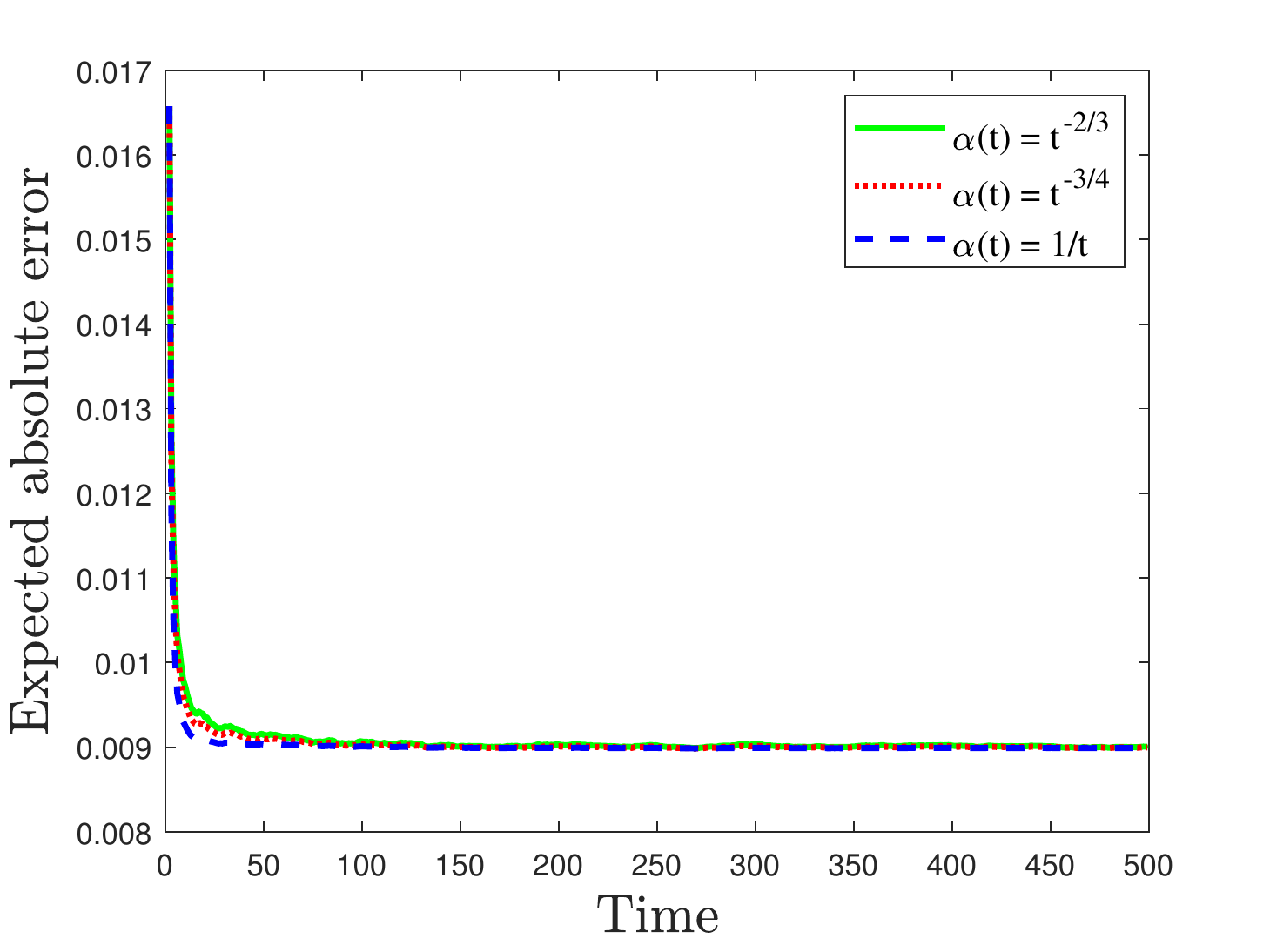}}%\vspace{-6.5cm}
	%	\subfigure[]{\includegraphics[width = 0.45\textwidth]{regret_convex_network_type2.eps}}
	%	\subfigure[]{\includegraphics[width = 0.45\textwidth]{convergence_convex_network_type2.eps}}
	\caption{Average pseudo regret and expected absolute error of D-SMD under different learning rates in the strongly convex-strongly concave case}
	\label{fig3}
\end{figure}
\begin{figure}[h]
	\centering
	%\vspace{.3in}
	%\setlength{\abovecaptionskip}{-78pt}
	%\subfigbottomskip = -2pt
	%\subfigcapskip=-100pt
	\subfigure[]{\includegraphics[width = 0.48\columnwidth]{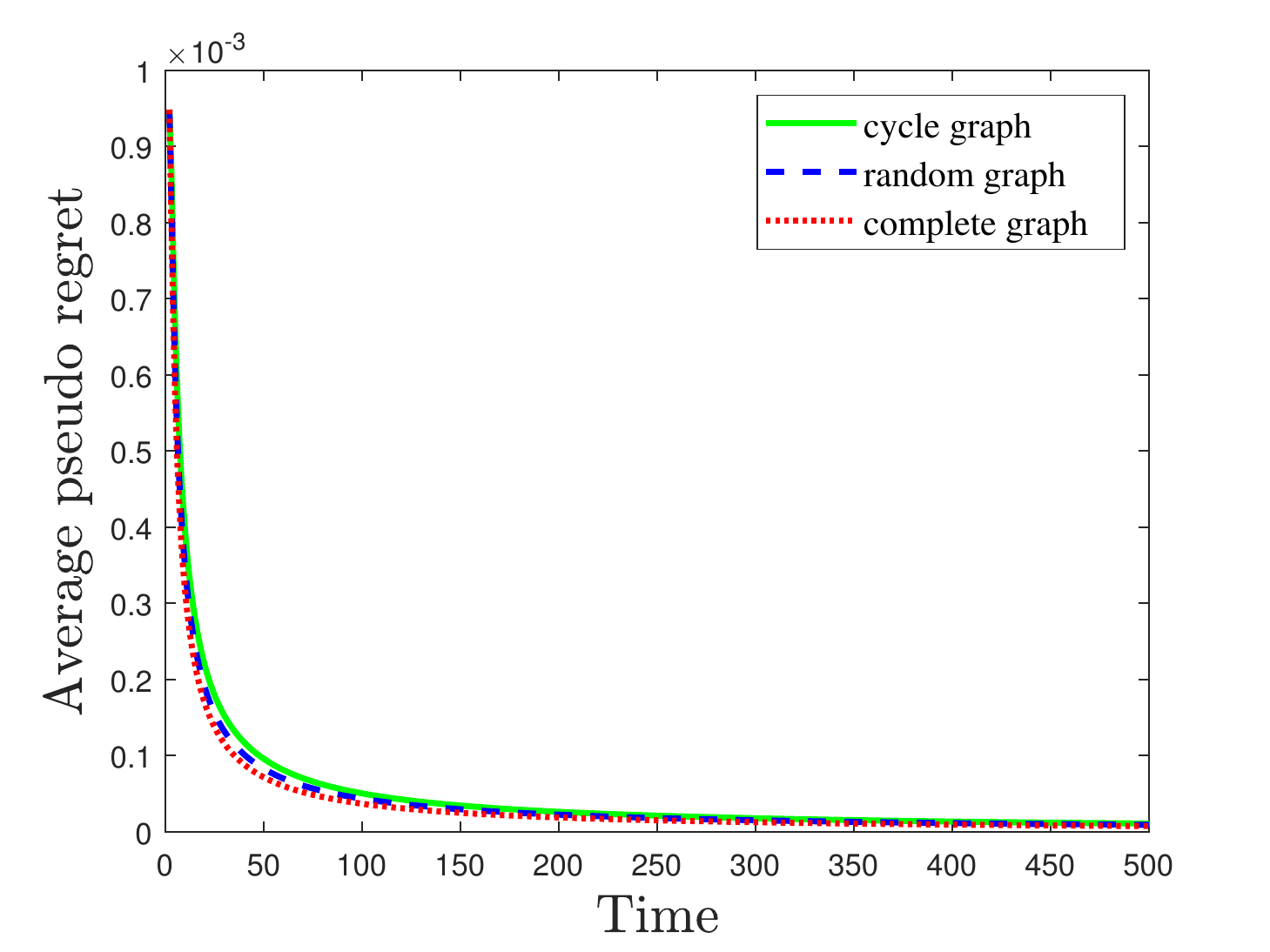}}
	\subfigure[]{\includegraphics[width = 0.48\columnwidth]{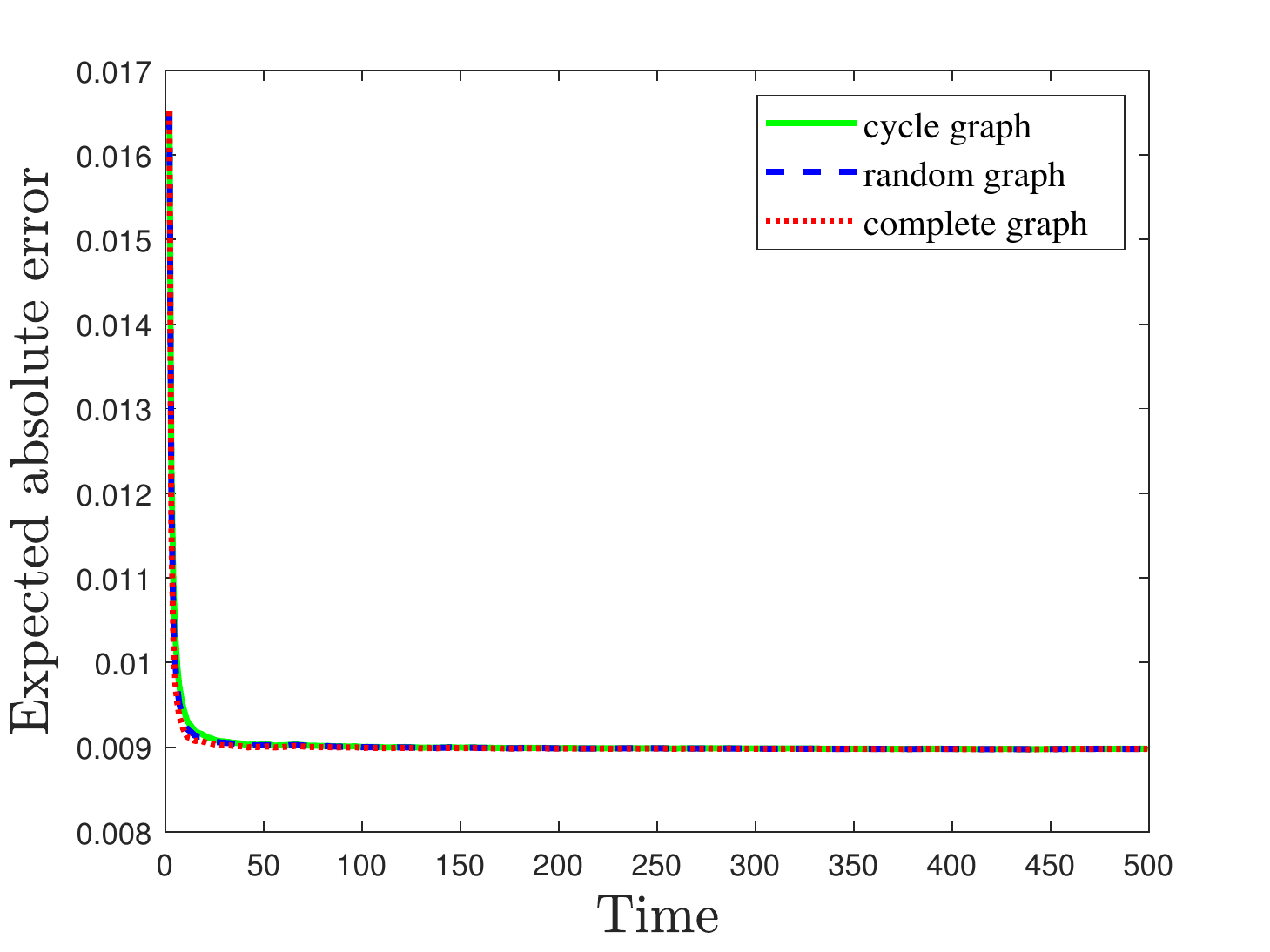}}%\vspace{-6.5cm}
	%\vspace{.3in}
	\caption{Average pseudo regret and expected absolute error of D-SMD under different network topologies in the strongly convex-strongly concave case}
	\label{fig4}
\end{figure}    
\section{Conclusion and Future Work}
In this paper, we proposed distributed stochastic mirror descent (D-SMD) to extend no-regret learning in two-person zero-sum games to network stochastic zero-sum games. In contrast to the previous works on Nash equilibrium seeking for network zero-sum games, we not only derived the convergence of D-SMD to the set of Nash equilibria, but also established regret bounds of D-SMD for convex-concave and strongly convex-strongly concave costs. The theoretical results were empirically verified by experiments on solving network stochastic matrix games.

It is of interest to study the convergence rate of the actual iterates of D-SMD in the strongly convex-strongly concave case. In addition, another interesting topic is to develop an optimistic variant of our algorithm as in \cite{ddk:11} to improve the regret rate and obtain the last-iteration convergence in merely convex-concave case.
\section*{Appendix}
\begin{lem}\cite[Lemma B.2, Proposition B.3]{ml:19}\label{lem1}
	Let $\psi$ be a continuously differentiable $\sigma$-strongly convex function on $\mathcal{X}$. Then, for all $x,y,z\in\mathcal{X}$, the Bregman divergence defined by \eqref{breg_def} satisfies
	\begin{align}
		D_{\psi}(y,x) - D_{\psi}(y,z) - D_{\psi}(z,x) &= \langle\nabla\psi(z) - \nabla\psi(x),y-z\rangle,\label{breg_prop1}\\
		D_{\psi}(x,y)&\ge \frac{\sigma}{2}\|x - y\|^2\label{breg_prop2}
	\end{align}
	Moreover, let $x^{+} = P_x(v)$ for $v\in\mathcal{X}^{\ast}$, where $\mathcal{X}^{\ast}$ is the dual space of $\mathcal{X}$ and $P_x(v) := \arg\min_{x'\in \mathcal{X}}\{\langle v,x - x'\rangle + D_{\psi}(x',x)\}$. Then
	\begin{equation}
		D_{\psi}(y,x^{+}) \le D_{\psi}(y,x) + \langle v,x - y\rangle + \frac{1}{2\sigma}\|v\|_{\ast}^2,
	\end{equation}
	where $\|v\|_{\ast}:= \sup\{\langle v,x\rangle: x\in\mathcal{X},\|x\|\le 1\}$ denotes the dual norm on $\mathcal{X}$.
\end{lem}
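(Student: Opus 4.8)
The plan is to establish the three claims in turn: the identity \eqref{breg_prop1} and the lower bound \eqref{breg_prop2} follow by direct computation, while the prox inequality is obtained by combining them with a variational optimality condition. For \eqref{breg_prop1}, I would expand all three divergences according to the definition \eqref{breg_def}, $D_\psi(a,b)=\psi(a)-\psi(b)-\langle\nabla\psi(b),a-b\rangle$. Subtracting $D_\psi(y,z)$ and $D_\psi(z,x)$ from $D_\psi(y,x)$ cancels the six $\psi$-value terms pairwise, leaving $-\langle\nabla\psi(x),y-x\rangle+\langle\nabla\psi(z),y-z\rangle+\langle\nabla\psi(x),z-x\rangle$, which regroups exactly into $\langle\nabla\psi(z)-\nabla\psi(x),y-z\rangle$. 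For \eqref{breg_prop2}, I would invoke the definition of $\sigma$-strong convexity, $\psi(x)\ge\psi(y)+\langle\nabla\psi(y),x-y\rangle+\frac{\sigma}{2}\|x-y\|^2$; rearranging places $D_\psi(x,y)$ on the left, yielding the bound immediately. Neither step presents an obstacle.

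The prox inequality is the substantive part. First I would characterize $x^+=P_x(v)$ by its first-order optimality condition over the convex set $\mathcal{X}$: since $x^+$ minimizes $h(x')=\langle v,x-x'\rangle+D_\psi(x',x)$ with $\nabla_{x'}h(x')=-v+\nabla\psi(x')-\nabla\psi(x)$, the variational inequality gives $\langle\nabla\psi(x^+)-\nabla\psi(x)-v,\,x'-x^+\rangle\ge0$ for all $x'\in\mathcal{X}$. Next, applying \eqref{breg_prop1} with $z=x^+$ yields $D_\psi(y,x^+)=D_\psi(y,x)-D_\psi(x^+,x)-\langle\nabla\psi(x^+)-\nabla\psi(x),\,y-x^+\rangle$. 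Taking $x'=y$ in the optimality condition bounds the last inner product from below, so $D_\psi(y,x^+)\le D_\psi(y,x)-D_\psi(x^+,x)+\langle v,\,x^+-y\rangle$. Splitting $\langle v,x^+-y\rangle=\langle v,x^+-x\rangle+\langle v,x-y\rangle$ then isolates the residual $-D_\psi(x^+,x)+\langle v,x^+-x\rangle$.

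The final step, which requires the most care, is to show this residual is at most $\frac{1}{2\sigma}\|v\|_*^2$. Using \eqref{breg_prop2} I bound $-D_\psi(x^+,x)\le-\frac{\sigma}{2}\|x^+-x\|^2$, and by the dual-norm definition together with Fenchel--Young, $\langle v,x^+-x\rangle\le\|v\|_*\|x^+-x\|\le\frac{1}{2\sigma}\|v\|_*^2+\frac{\sigma}{2}\|x^+-x\|^2$. The two $\|x^+-x\|^2$ terms cancel, leaving exactly $\frac{1}{2\sigma}\|v\|_*^2$ and completing the proof. I expect the main obstacle to be precisely this cancellation: one must apply Fenchel--Young with the weight matched to the strong-convexity modulus $\sigma$, since any mismatch leaves an uncancelled quadratic term and breaks the bound. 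A secondary point of care is using the variational (inequality) form of optimality rather than a vanishing gradient, since $x^+$ may lie on the boundary of $\mathcal{X}$.
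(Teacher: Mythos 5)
Your proof is correct. Note that the paper does not prove this lemma at all---it imports it verbatim from \cite{ml:19}---and your argument (the three-point identity by direct expansion of \eqref{breg_def}, the strong-convexity lower bound, the variational optimality condition combined with \eqref{breg_prop1} at $z=x^{+}$, and Fenchel--Young with weight matched to $\sigma$ so that $-D_{\psi}(x^{+},x)$ absorbs the quadratic term) is exactly the standard proof given in such references, including the correct care taken for boundary minimizers.
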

\begin{lem}\cite{rs:71}\label{lem3}
	Let $\{X_t\}$, $\{Y_t\}$ and $\{Z_t\}$ be sequences of non-negative random variables with $\sum_{t=0}^{\infty}Z_t < \infty$ almost surely and let $\{\mathcal{F}_t\}$ be a filtration such that $\mathcal{F}_t\subset\mathcal{F}_{t+1}$. If $X_t$, $Y_t$, $Z_t$ are adapted to $\{\mathcal{F}_t\}$ and 
	\[\mathbb{E}[Y_{t+1}\mid\mathcal{F}_t] \le Y_t - X_t + Z_t,\] 
	then, almost surely, $\sum_{t=0}^{\infty}X_t < \infty$ and $Y_t$ converges to a non-negative random variable $Y$. 
\end{lem}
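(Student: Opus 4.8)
The plan is to derive the statement from Doob's supermartingale convergence theorem applied to a carefully built auxiliary process, with a localization argument to handle the fact that the hypothesis on $\{Z_t\}$ is only almost sure summability. Write $A_t := \sum_{s=0}^{t-1}X_s$ and $B_t := \sum_{s=0}^{t-1}Z_s$ (with $A_0 = B_0 = 0$), and set $W_t := Y_t + A_t - B_t$. Each $X_s,Z_s$ is $\mathcal{F}_s$-adapted, so $A_t,B_t$ are $\mathcal{F}_t$-measurable, and the recursion $\mathbb{E}[Y_{t+1}\mid\mathcal{F}_t]\le Y_t - X_t + Z_t$ gives
\[\mathbb{E}[W_{t+1}\mid\mathcal{F}_t] = \mathbb{E}[Y_{t+1}\mid\mathcal{F}_t] + A_{t+1} - B_{t+1} \le (Y_t - X_t + Z_t) + (A_t + X_t) - (B_t + Z_t) = W_t,\]
so $\{W_t\}$ is a supermartingale. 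The difficulty, which I regard as the main obstacle, is that one cannot apply a convergence theorem to $W_t$ directly: since $Y_t,X_s\ge 0$ we only have $W_t \ge -B_t \ge -\sum_{s=0}^\infty Z_s$, and the right-hand side is merely almost surely finite, hence neither uniformly bounded nor a priori integrable.

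To overcome this I would localize. For each level $a>0$ define the stopping time $\tau_a := \inf\{t\ge 0 : B_{t+1} > a\}$; this is an $\{\mathcal{F}_t\}$-stopping time because $B_{t+1}=\sum_{s=0}^t Z_s$ is $\mathcal{F}_t$-measurable, so $\{\tau_a\le t\}=\bigcup_{s\le t}\{B_{s+1}>a\}\in\mathcal{F}_t$. For $t\le\tau_a$ one has $B_t\le a$ by the definition of $\tau_a$, so the stopped process $W_{t\wedge\tau_a}$ — again a supermartingale, being the stopping of one — obeys $W_{t\wedge\tau_a}\ge -B_{t\wedge\tau_a}\ge -a$. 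Its negative part is therefore bounded by $a$, giving $\sup_t\mathbb{E}[(W_{t\wedge\tau_a})^-]\le a<\infty$, and Doob's supermartingale convergence theorem yields that $W_{t\wedge\tau_a}$ converges almost surely to a finite limit. Stopping at the level $a$ is precisely what restores the lower bound (and the integrability) that $W_t$ itself lacks.

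The last step is to transfer this to the desired conclusion and then remove the parameter $a$. On the event $\{\tau_a=\infty\}$ one has $t\wedge\tau_a=t$, so $W_t$ converges a.s.; on the same event $B_t$ is nondecreasing and bounded by $a$, hence converges, so $S_t := W_t + B_t = Y_t + A_t$ converges a.s. to a finite limit. Since $A_t$ is nondecreasing and $Y_t\ge 0$, the bound $A_t\le S_t\le\sup_t S_t<\infty$ forces $\sum_{s=0}^\infty X_s=\lim_t A_t<\infty$ a.s., and then $Y_t=S_t-A_t$ converges a.s. as a difference of convergent sequences, with limit $Y\ge 0$. Finally, the hypothesis $\sum_{s=0}^\infty Z_s<\infty$ a.s. gives $\mathbb{P}\big(\bigcup_{a\in\mathbb{N}}\{\tau_a=\infty\}\big)=\mathbb{P}\big(\sum_s Z_s<\infty\big)=1$, so both conclusions hold almost surely on all of $\Omega$. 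The only point demanding care throughout is integrability, which the conditional-expectation inequality tacitly presumes for the $Y_t$ and which the localization keeps intact for the stopped process; the remainder is the bookkeeping described above.
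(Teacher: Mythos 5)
The paper does not actually prove this lemma: it is quoted from Robbins and Siegmund \cite{rs:71} and used as a black box in the proof of Theorem 5, so there is no in-paper argument to compare against. Your proof is correct, and it is essentially the classical proof of the Robbins--Siegmund theorem: compensate to form the supermartingale $W_t = Y_t + \sum_{s<t}X_s - \sum_{s<t}Z_s$, use the stopping times $\tau_a$ to upgrade the merely almost-sure bound $\sum_t Z_t < \infty$ into the uniform lower bound $W_{t\wedge\tau_a}\ge -a$ that Doob's supermartingale convergence theorem needs, conclude on $\{\tau_a=\infty\}$, and note that these events exhaust $\Omega$ up to a null set. Your verification that $\tau_a$ is a stopping time (via the $\mathcal{F}_t$-measurability of $B_{t+1}$) and the final bookkeeping with $S_t = Y_t + A_t$ are both sound. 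The one point you flag but do not fully close is integrability: Doob's theorem requires $W_{t\wedge\tau_a}\in L^1$, and the bound $(W_{t\wedge\tau_a})^-\le a$ controls only the negative part, so you still need $\mathbb{E}[Y_0]<\infty$ to start the induction $\mathbb{E}\left[W_{(t+1)\wedge\tau_a}\right]\le\mathbb{E}\left[W_{t\wedge\tau_a}\right]$, which then keeps every stopped term integrable. This is repaired by one more localization of exactly the kind you already use: multiply through by $\mathbf{1}_{\{Y_0\le b\}}\in\mathcal{F}_0$, which preserves the supermartingale inequality since the event is $\mathcal{F}_0$-measurable, and let $b\to\infty$ through the integers. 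In the paper's application this caveat is vacuous anyway, since there $Y_t$ is the Lyapunov function $V(t,x_1^{\ast},x_2^{\ast})$ of \eqref{def-v}, built from Bregman divergences over compact strategy sets and hence bounded.
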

\begin{lem}\label{lem2}
	Let Assumptions \ref{asm1}-\ref{asm2} hold. Suppose that $x_{l,i}(t)$, $v_{l,i}(t)$ and $u_{l,i}(t)$ are generated by Algorithm \ref{alg1}. Then, for each $l\in\{1,2\}$ and $i\in\mathcal{V}_l$,
	\begin{align}
		E[\|x_{l,i}(t) - \bar{x}_{l}(t)\|] &\le H_l(t),\label{lem_bound_2}\\
		E[\|\bar{x}_{l}(t) - v_{l,i}(t)\|] &\le H_l(t),\label{lem_bound_3}\\
		E[\|\bar{x}_{l}(t) - u_{l,i}(t)\|] &\le H_l(t),\label{lem_bound_4}
	\end{align}
	where
	\begin{align}
		H_l(t) &= n_l\Gamma_l\theta_l^{t-1}\Lambda_l + 2(L + \nu_l)\alpha(t-1) + n_l\Gamma_l(L + \nu_l)\sum_{s=1}^{t-1}\theta_l^{t-1-s}\alpha(s-1),\label{H_l}
	\end{align}
	and $\Lambda_l\triangleq \max_{i\in\mathcal{V}_l}\|x_{l,i}(0)\|$.
\end{lem}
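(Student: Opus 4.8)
The plan is to view Algorithm \ref{alg1} as a perturbed linear consensus recursion and unroll it with the transition matrices $\Phi_l(t,s)$, controlling the perturbations via the prox-mapping and the subgradient bounds. First I would introduce the per-step displacement $e_{l,i}(t) := x_{l,i}(t+1) - v_{l,i}(t)$, so that the update reads $x_{l,i}(t+1) = \sum_{j=1}^{n_l} w_{l,ij}(t)\,x_{l,j}(t) + e_{l,i}(t)$. Since each $W_l(t)$ is doubly stochastic, the network average $\bar{x}_l(t) := \frac{1}{n_l}\sum_{i=1}^{n_l} x_{l,i}(t)$ obeys $\bar{x}_l(t+1) = \bar{x}_l(t) + \frac{1}{n_l}\sum_i e_{l,i}(t)$. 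Unrolling the stacked recursion $x_l(t+1) = W_l(t)x_l(t) + e_l(t)$ from $t=0$ and subtracting the average, I would obtain the decomposition
\[
x_{l,i}(t) - \bar{x}_l(t) = \sum_{j}\left([\Phi_l(t-1,0)]_{ij} - \frac{1}{n_l}\right)x_{l,j}(0) + \sum_{s=0}^{t-2}\sum_{j}\left([\Phi_l(t-1,s+1)]_{ij} - \frac{1}{n_l}\right)e_{l,j}(s) + e_{l,i}(t-1) - \frac{1}{n_l}\sum_{j}e_{l,j}(t-1),
\]
where the final displacement term receives no mixing discount because $\Phi_l(t-1,t)$ is the identity. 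Note that the coefficients $[\Phi_l(t-1,s+1)]_{ij} - 1/n_l$ are deterministic, as the mixing matrices are prescribed and data-independent, so they may be pulled out of the expectation.

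The second step is to bound $\mathbb{E}[\|e_{l,i}(t)\|]$. Applying Lemma \ref{lem1} with $y = x = v_{l,i}(t)$, $v = -\alpha(t)\hat{g}_{l,i}(t)$, $x^{+} = x_{l,i}(t+1)$ and $\sigma = 1$ gives $D_{\psi_l}(v_{l,i}(t),x_{l,i}(t+1)) \le \frac{1}{2}\alpha(t)^2\|\hat{g}_{l,i}(t)\|_{\ast}^2$, while \eqref{breg_prop2} gives $D_{\psi_l}(v_{l,i}(t),x_{l,i}(t+1)) \ge \frac{1}{2}\|e_{l,i}(t)\|^2$, so $\|e_{l,i}(t)\| \le \alpha(t)\|\hat{g}_{l,i}(t)\|_{\ast}$. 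Since $f_{l,i}$ is $L$-Lipschitz, its partial subgradients satisfy $\|g_{l,i}(t)\|_{\ast} \le L$; combining with Assumption \ref{asm2} and Jensen's inequality yields $\mathbb{E}[\|\hat{g}_{l,i}(t)\|_{\ast}\mid\mathcal{F}_t] \le \|g_{l,i}(t)\|_{\ast} + \sqrt{\mathbb{E}[\|\hat{g}_{l,i}(t) - g_{l,i}(t)\|_{\ast}^2\mid\mathcal{F}_t]} \le L + \nu_l$, and the tower property gives $\mathbb{E}[\|e_{l,i}(t)\|] \le (L+\nu_l)\alpha(t)$.

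Then I would substitute this bound into the three contributions and invoke Lemma \ref{lem_graph}. The initial-condition term is bounded by $\sum_j \Gamma_l\theta_l^{t-1}\|x_{l,j}(0)\| \le n_l\Gamma_l\theta_l^{t-1}\Lambda_l$; the last-displacement term by $2(L+\nu_l)\alpha(t-1)$; and the history term, using $|[\Phi_l(t-1,s+1)]_{ij} - 1/n_l| \le \Gamma_l\theta_l^{t-2-s}$ and the reindexing $s \mapsto s-1$, by $n_l\Gamma_l(L+\nu_l)\sum_{s=1}^{t-1}\theta_l^{t-1-s}\alpha(s-1)$. Summing the three yields exactly $H_l(t)$ and establishes \eqref{lem_bound_2}. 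For \eqref{lem_bound_3} and \eqref{lem_bound_4} I would avoid re-unrolling: since $v_{l,i}(t) = \sum_j w_{l,ij}(t)x_{l,j}(t)$ and $u_{l,i}(t) = \sum_j w_{12,ij}(t)x_{l,j}(t)$ with both weight sets summing to one, I can write $\bar{x}_l(t) - v_{l,i}(t) = \sum_j w_{l,ij}(t)(\bar{x}_l(t) - x_{l,j}(t))$ and likewise for $u$, so the triangle inequality and \eqref{lem_bound_2} give $\mathbb{E}[\|\bar{x}_l(t) - v_{l,i}(t)\|] \le \sum_j w_{l,ij}(t)H_l(t) = H_l(t)$, and identically for $u_{l,i}(t)$.

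The main obstacle I anticipate is the index bookkeeping in the unrolling: aligning the transition-matrix arguments with the exponents of Lemma \ref{lem_graph} so that the history weights genuinely carry $\theta_l^{t-2-s}$ before reindexing, and correctly isolating the final displacement term, which is smoothed by no transition matrix and is therefore responsible for the $2(L+\nu_l)\alpha(t-1)$ summand in $H_l(t)$. A secondary technical point is the careful use of conditioning: $\hat{g}_{l,i}(t)$ is not $\mathcal{F}_t$-measurable whereas $v_{l,i}(t)$ and $g_{l,i}(t)$ are, so the subgradient-norm estimate must be applied conditionally on $\mathcal{F}_t$ before taking the outer expectation.
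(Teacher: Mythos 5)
Your proposal is correct and follows essentially the same route as the paper: unroll the perturbed consensus recursion via the transition matrices $\Phi_l(t,s)$, bound the per-step displacement $x_{l,i}(t+1)-v_{l,i}(t)$ by $\alpha(t)\|\hat{g}_{l,i}(t)\|_{\ast}$ with expectation at most $(L+\nu_l)\alpha(t)$, invoke Lemma \ref{lem_graph}, and deduce \eqref{lem_bound_3}--\eqref{lem_bound_4} from the convexity of the norm and the stochasticity of the weights. The only cosmetic difference is that you obtain the displacement bound from the prox-inequality of Lemma \ref{lem1} with $y=x$, while the paper uses the first-order optimality condition together with the $1$-strong convexity of $\psi_l$; the two computations are equivalent.
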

\begin{proof}
By the definition of $v_{l,i}(t)$, we write the iterates as follows
\begin{align*}
	x_{l,i}(t) &= v_{l,i}(t-1) - (v_{l,i}(t-1) - x_{l,i}(t))\notag\\
	&= \sum_{j=1}^{n_l}[\Phi_l(t-1,0)]_{ij}x_{l,j}(0) + \sum_{s=1}^{t-1}\sum_{j=1}^{n_l}[\Phi_l(t-1,s)]_{ij}d_{l,j}(s-1) + d_{l,i}(t-1),
\end{align*}
where $d_{l,i}(t-1) = x_{l,i}(t) - v_{l,i}(t-1)$. By using the doubly stochastic property of $W_l(t)$, we derive
\begin{equation*}
	\bar{x}_l(t) = \frac{1}{n_l}\sum_{j=1}^{n_l}x_{l,j}(0) + \frac{1}{n_l}\sum_{s=1}^t\sum_{j=1}^{n_l}d_{l,j}(s-1).
\end{equation*}
Therefore,
\begin{align}
	\|x_{l,i}(t) - \bar{x}_l(t)\|&\le \sum_{j=1}^{n_l}\|[\Phi_l(t-1,0)]_{ij} - \frac{1}{n_l}\|\|x_{l,j}(0)\|\notag\\
	&\quad + \sum_{s=1}^{t-1}\sum_{j=1}^{n_l}\|[\Phi_l(t-1,s)]_{ij} - \frac{1}{n_l}\|\|d_{l,j}(s-1)\| + \|\frac{1}{n_l}\sum_{j=1}^{n_l}d_{l,j}(t-1) - d_{l,i}(t-1)\|.\label{bound_3}
\end{align}
Thus, we only need to bound the term $\|d_{l,i}(t)\|$. Recalling the definition of $x_{l,i}(t+1)$ and \eqref{prox_mapping}, we have
\begin{equation}\label{def_x}
	x_{l,i}(t+1) = \arg\min_{x'\in X_l}\{\langle \alpha(t)\hat{g}_{l,i}(t),x' - v_{l,i}(t)\rangle + D_{\psi_l}(x',v_{l,i}(t))\}.
\end{equation}
From the first-order optimality condition, we derive that for all $x_l\in X_l$,
\begin{equation}\label{proj}
	\langle \nabla \psi_l(x_{l,i}(t+1)) - \nabla \psi_l(v_{l,i}(t)) + \alpha(t)\hat{g}_{l,i}(t), x_{l,i}(t+1) - x_l\rangle \le 0.
\end{equation}
Setting $x_l = v_{l,i}(t)$ implies
\begin{align*}
	\alpha(t)\|\hat{g}_{l,i}(t)\|_{\ast}\|d_{l,i}(t)\| &\ge \langle\alpha(t)\hat{g}_{l,i}(t), v_{l,i}(t) - x_{l,i}(t+1)\rangle\\
	&\ge \langle \nabla \psi_l(x_{l,i}(t+1)) - \nabla \psi_l(v_{l,i}(t)), x_{l,i}(t+1) - v_{l,i}(t)\rangle\\
	&\ge \|d_{l,i}(t)\|^2,
\end{align*}
where the last inequality follows by the strong convexity of $\psi_l$. Therefore, \begin{equation}\label{error_bound}
	\|d_{l,i}(t)\|\le \alpha(t)\|\hat{g}_{l,i}(t)\|_{\ast}.
\end{equation}
It follows from Assumption \ref{asm1} that $\|g_{l,i}(t)\|_{\ast}\le L$. By H\"{o}lder's inequality and the bounded second moment condition of Assumption \ref{asm2}, we further achieve
\begin{equation}\label{sample_gradient_bound}
	\mathbb{E}[\|\hat{g}_{l,i}(t)\|_{\ast}^2|\mathcal{F}_t]\le (L + \nu_l)^2.
\end{equation}
Note that $\sqrt{x}$ is a concave function. Using Jensen's inequality,
\[\mathbb{E}[\|\hat{g}_{l,i}(t)\|_{\ast}|\mathcal{F}_t]\le \sqrt{\mathbb{E}[\|\hat{g}_{l,i}(t)\|_{\ast}^2|\mathcal{F}_t]}\le L + \nu_l.\]
According to the iterated expectation rule, $\mathbb{E}[\|\hat{g}_{l,i}(t)\|_{\ast}]\le L + \nu_l$. This together with \eqref{error_bound} produces 
$$\mathbb{E}[\|d_{l,i}(t)\|]\le (L + \nu_l)\alpha(t).$$
Then, by Lemma \ref{lem_graph} and taking the expectation in \eqref{bound_3}, we derive \eqref{lem_bound_2}.
Furthermore, by the convexity of $\|\cdot\|$ and $\sum_{j=1}^{n_l}w_{l,ij}(t) = 1$, we obtain
\begin{align*}
	\mathbb{E}[\|v_{l,i}(t) - \bar{x}_{l}(t)\|]
	& = \mathbb{E}\left[\left\|\sum_{j=1}^{n_l}w_{l,ij}(t)x_{l,j}(t) - \bar{x}_{l}(t)\right\|\right] \le \sum_{j=1}^{n_l}w_{l,ij}(t)\mathbb{E}[\|x_{l,j}(t) - \bar{x}_{l}(t)\|]\le H_l(t).
\end{align*}
Thus, \eqref{lem_bound_3} holds. In a similar way, by using $\sum_{j=1}^{n_l}w_{12,ij}(t) = 1$, we obtain \eqref{lem_bound_4}.
\end{proof}

\noindent{\bf Proof of Theorem 2}. By \eqref{proj} and using Lemma \ref{lem1}, 
we obtain that, for all $x_1\in\mathcal{X}_1$, 
\begin{align}
	\langle \alpha(t)\hat{g}_{1,i}(t),x_{1,i}(t+1) - x_1\rangle &\le \langle \nabla\psi_1(v_{1,i}(t)) - \nabla\psi_1(x_{1,i}(t+1)),x_{1,i}(t+1) - x_1\rangle\notag\\
	&= D_{\psi_1}(x_1, v_{1,i}(t)) - D_{\psi_1}(x_1, x_{1,i}(t+1)) - D_{\psi_1}(x_{1,i}(t+1), v_{1,i}(t))\notag\\
	&\le D_{\psi_1}(x_1, v_{1,i}(t)) - D_{\psi_1}(x_1, x_{1,i}(t+1)).\label{thm2-1}
\end{align}
Note by Assumption \ref{asm3} that 
\begin{align*}
	\sum_{t=1}^T\frac{1}{\alpha(t)}\sum_{i=1}^{n_1}D_{\psi_1}(x_1,v_{1,i}(t+1))&\le \sum_{t=1}^T\frac{1}{\alpha(t)}\sum_{i=1}^{n_1}\sum_{j=1}^{n_1}w_{1,ij}(t)D_{\psi_1}(x_1,x_{1,j}(t+1))\\
	& = \sum_{t=1}^T\frac{1}{\alpha(t)}\sum_{i=1}^{n_1}D_{\psi_1}(x_1,x_{1,i}(t+1)).
\end{align*}
Thus, dividing $\alpha(t)$ from both sides of \eqref{thm2-1} and taking a summation for $i=1,\dots,n_1$ and $t=1,\dots,T$, we derive
\begin{align}
	&\quad\sum_{t=1}^T\sum_{i=1}^{n_1}\langle \hat{g}_{1,i}(t), x_{1,i}(t+1) - x_1\rangle \notag\\
	&\le \sum_{i=1}^{n_1}\sum_{t=1}^T\frac{1}{\alpha(t)}[D_{\psi_1}(x_1, v_{1,i}(t)) - D_{\psi_1}(x_1, v_{1,i}(t+1))]  \label{inner_bound_1}\\
	&\le \sum_{i=1}^{n_1}\left[\left(\frac{1}{\alpha(1)}\right)D_{\psi_1}(x_1, v_{1,i}(1)) + \sum_{t=2}^TD_{\psi_1}(x_1, v_{1,i}(t))\left(\frac{1}{\alpha(t)} - \frac{1}{\alpha(t-1)}\right)\right]\label{bound_2}\\
	&\le \frac{n_1R_1^2}{\alpha(T)},\notag
\end{align}
where the last inequality follows from the definition of $R_1^2$ and the non-increasing of $\alpha(t)$. This together with \eqref{error_bound} and the definition $d_{l,i}(t) = x_{l,i}(t+1) - v_{l,i}(t)$ yields
\begin{align}
	&\quad\mathbb{E}\left[\sum_{t=1}^T\sum_{i=1}^{n_1}\langle \hat{g}_{1,i}(t), v_{1,i}(t) - x_1\rangle\right]\notag\\
	&= \sum_{t=1}^T\sum_{i=1}^{n_1}\mathbb{E}[\langle \hat{g}_{1,i}(t), x_{1,i}(t+1) - x_1\rangle] + \sum_{t=1}^T\sum_{i=1}^{n_1}\mathbb{E}[\langle \hat{g}_{1,i}(t), v_{1,i}(t) - x_{1,i}(t+1)\rangle]\notag\\
	&\le \frac{n_1R_1^2}{\alpha(T)} + \sum_{t=1}^T\sum_{i=1}^{n_1}\alpha(t)\mathbb{E}[\|\hat{g}_{1,i}(t)\|_{\ast}^2]\le \frac{n_1R_1^2}{\alpha(T)} + n_1(L+\nu_1)^2\sum_{t=1}^T\alpha(t) ,\label{upper_bound_1}
\end{align}
where the last inequality follows from \eqref{sample_gradient_bound}. Since $v_{1,i}(t)$ is adapted to $\mathcal{F}_t$, by Assumption \ref{asm2},
\begin{equation}\label{condition_unbias}
	\mathbb{E}[\langle g_{1,i}(t) - \hat{g}_{1,i}(t),v_{1,i}(t) - x_1\rangle|\mathcal{F}_{t}] = 0.
\end{equation}
Therefore,
\begin{equation}\label{unbias}
	\mathbb{E}[\langle g_{1,i}(t) - \hat{g}_{1,i}(t),v_{1,i}(t) - x_1\rangle] = 0.
\end{equation}
As a combination of \eqref{upper_bound_1} and \eqref{unbias}, we conclude
\begin{equation}\label{upper_bound}
	\mathbb{E}\left[\sum_{t=1}^T\sum_{i=1}^{n_1}\langle g_{1,i}(t), v_{1,i}(t) - x_1\rangle\right] \le \frac{n_1R_1^2}{\alpha(T)} + n_1(L+\nu_1)^2\sum_{t=1}^T\alpha(t).
\end{equation}

Next, we establish a lower bound for $\sum_{i=1}^{n_1}\langle g_{1,i}(t), v_{1,i}(t) - x_1\rangle$. Due to the convexity of $f_{1,i}(\cdot,\cdot)$ with respect to the first element, 
\begin{align}
	\sum_{i=1}^{n_1}\langle g_{1,i}(t), v_{1,i}(t) - x_1\rangle &\ge \sum_{i=1}^{n_1}[f_{1,i}(v_{1,i}(t),u_{2,i}(t)) - f_{1,i}(x_1,u_{2,i}(t))].\label{lower_bound_1}
\end{align}
On the other hand, by adding and subtracting some terms, we get
\begin{align}
	&\quad n_1(U(\bar{x}_1(t),\bar{x}_2(t)) - U(x_1,\bar{x}_2(t)))\notag\\
	&= \sum_{i=1}^{n_1}[f_{1,i}(\bar{x}_1(t),\bar{x}_2(t)) - f_{1,i}(x_{1,i}(t),\bar{x}_2(t)) + f_{1,i}(x_{1,i}(t),\bar{x}_2(t)) - f_{1,i}(v_{1,i}(t),\bar{x}_2(t))\notag\\
	&\quad + f_{1,i}(v_{1,i}(t),\bar{x}_{2}(t)) - f_{1,i}(v_{1,i}(t),u_{2,i}(t)) + f_{1,i}(v_{1,i}(t),u_{2,i}(t)) - f_{1,i}(x_1,u_{2,i}(t))\notag\\
	&\quad + f_{1,i}(x_1,u_{2,i}(t)) - f_{1,i}(x_1,\bar{x}_2(t))]\notag\\
	&\le \sum_{i=1}^{n_1}[L(\|x_{1,i}(t) - \bar{x}_1(t)\| + \|x_{1,i}(t) - v_{1,i}(t)\|) + 2L\|u_{2,i}(t) - \bar{x}_2(t)\|\notag\\
	&\quad + f_{1,i}(v_{1,i}(t),u_{2,i}(t)) - f_{1,i}(x_1,u_{2,i}(t))],\label{add_substract}
\end{align}
where the last inequality follows from the Lipschitz continuity of $f_{1,i}$. Plugging the above inequality to \eqref{lower_bound_1}, we derive
\begin{align}
	\sum_{i=1}^{n_1}\langle g_{1,i}(t), v_{1,i}(t) - x_1\rangle &\ge n_1(U(\bar{x}_1(t),\bar{x}_2(t)) - U(x_1,\bar{x}_2(t)))\notag\\
	&\quad - \sum_{i=1}^{n_1}[L(\|x_{1,i}(t) - \bar{x}_1(t)\| + \|x_{1,i}(t) - v_{1,i}(t)\|) + 2L\|u_{2,i}(t) - \bar{x}_2(t)\|].\label{lower_bound_2}
\end{align}
It remains to connect this lower bound and $\bar{R}_1^{(i)}(T)$. Notice that
\begin{align}
	&\quad U(\bar{x}_1(t),\bar{x}_2(t)) - U(x_1,\bar{x}_2(t))\notag\\
	&= U(\bar{x}_1(t),\bar{x}_2(t)) - U(\bar{x}_1(t),u_{2,i}(t)) + U(\bar{x}_1(t),u_{2,i}(t)) - U(x_{1,i}(t),u_{2,i}(t))\notag\\
	&\quad + U(x_{1,i}(t),u_{2,i}(t)) - U(x_1,u_{2,i}(t)) + U(x_1,u_{2,i}(t)) - U(x_1,\bar{x}_2(t))\notag\\
	&\ge U(x_{1,i}(t),u_{2,i}(t)) - U(x_1,u_{2,i}(t)) - L\|x_{1,i}(t) - \bar{x}_1(t)\|\notag\\
	&\quad - 2L\|u_{2,i}(t) - \bar{x}_2(t)\|.\label{lower_bound_3}
\end{align}
Recall from the definition of $\bar{R}_1^{(i)}(T)$ that
\begin{align*}
	\max_{x_1\in X_1}\mathbb{E}\left[\sum_{t=1}^T(U(x_{1,i}(t),u_{2,i}(t)) - U(x_1,u_{2,i}(t)))\right]&= \bar{R}_1^{(i)}(T).
\end{align*}
By taking expectation on both sides of \eqref{lower_bound_2}-\eqref{lower_bound_3} and making a summation from $t=1$ to $t=T$, we obtain
\begin{align}
	\max_{x_1\in X_1}\mathbb{E}\left[\sum_{t=1}^T\sum_{i=1}^{n_1}\langle g_{1,i}(t), v_{1,i}(t) - x_1\rangle\right] &\ge n_1\bar{R}_1^{(i)}(T) - n_1L\sum_{t=1}^T\mathbb{E}[\|x_{1,i}(t) - \bar{x}_1(t)\| + 2\|u_{2,i}(t) - \bar{x}_2(t)\|]\notag\\
	&\quad -L\sum_{t=1}^T\sum_{i=1}^{n_1}\mathbb{E}\Big[\|x_{1,i}(t) - \bar{x}_1(t)\| + \|x_{1,i}(t) - v_{1,i}(t)\|\notag\\
	&\quad + 2\|u_{2,i}(t) - \bar{x}_2(t)\|\Big]\label{lower_bound}
\end{align}
Note by Lemma \ref{lem2} and the elementary inequality $\|a + b\|\le \|a\| + \|b\|$ that $\mathbb{E}[\|x_{1,i}(t) - v_{1,i}(t)\|]\le 2H_1(t)$. Thus, combining \eqref{lower_bound} with \eqref{upper_bound}, we derive
\begin{align*}
	\bar{R}_1^{(i)}(T)&\le \frac{R_1^2}{\alpha(T)} + (L+\nu_1)^2\sum_{t=1}^T\alpha(t) + 4L\sum_{t=1}^T(H_1(t) + H_2(t)).
\end{align*}
This together with \eqref{H_l} produces \eqref{regret_bound_final}.

\noindent{\bf Proof of Theorem 3}. Taking the same idea as in the proof of Theorem \ref{thm1}, we first establish an upper bound for $\mathbb{E}\left[\sum_{t=1}^T\sum_{i=1}^{n_1}\langle g_{1,i}(t),v_{1,i}(t) - x_1\rangle\right]$. Setting $\alpha(t) = \frac{1}{\eta (t+1)}$ in \eqref{bound_2}, we obtain
\[\sum_{t=1}^T\sum_{i=1}^{n_1}\langle \hat{g}_{1,i}(t), x_{1,i}(t+1) - x_1\rangle\le \sum_{t=1}^T\sum_{i=1}^{n_1}\eta D_{\psi_1}(x_1,v_{1,i}(t)).\]
Similar to the procedure of obtaining \eqref{upper_bound_1}, we use \eqref{error_bound} to derive
\begin{equation*}
	\sum_{t=1}^T\sum_{i=1}^{n_1}\langle \hat{g}_{1,i}(t),v_{1,i}(t) - x_1\rangle\le \sum_{t=1}^T\sum_{i=1}^{n_1}\left[\eta D_{\psi_1}(x_1,v_{1,i}(t)) + \alpha(t)\|\hat{g}_{1,i}(t)\|_{\ast}^2\right].
\end{equation*}
It then follows from \eqref{unbias} that
\begin{align}
	\mathbb{E}\left[\sum_{t=1}^T\sum_{i=1}^{n_1}\langle g_{1,i}(t),v_{1,i}(t) - x_1\rangle\right]&\le \sum_{t=1}^T\sum_{i=1}^{n_1}\left(\eta \mathbb{E}[D_{\psi_1}(x_1,v_{1,i}(t))] + \alpha(t)\mathbb{E}[\|\hat{g}_{1,i}(t)\|_{\ast}^2]\right).\label{strongly_upper_bound}
\end{align}
Since $f_{1,i}$ is strongly convex with respect to $\psi_1$, by Definition \ref{strong_def}, we have
\begin{align*}
	\sum_{i=1}^{n_1}\langle g_{1,i}(t), v_{1,i}(t) - x_1\rangle - \eta D_{\psi_1}(x_1,v_{1,i}(t)) &\ge \sum_{i=1}^{n_1}[f_{1,i}(v_{1,i}(t),u_{2,i}(t)) - f_{1,i}(x_1,u_{2,i}(t))].
\end{align*}
Then, by using the analysis procedure similar to that of deriving \eqref{lower_bound}, we get
\begin{align}
	\max_{x_1\in X_1}\mathbb{E}\left[\sum_{t=1}^T\sum_{i=1}^{n_1}\langle g_{1,i}(t), v_{1,i}(t) - x_1\rangle\right] &\ge n_1\bar{R}_1^{(i)}(T) - n_1L\sum_{t=1}^T\mathbb{E}[\|x_{1,i}(t) - \bar{x}_1(t)\| + 2\|u_{2,i}(t) - \bar{x}_2(t)\|]\notag\\
	&\quad -L\sum_{t=1}^T\sum_{i=1}^{n_1}\mathbb{E}\Big[\|x_{1,i}(t) - \bar{x}_1(t)\| + \|x_{1,i}(t) - v_{1,i}(t)\|\notag\\
	&\quad + 2\|u_{2,i}(t) - \bar{x}_2(t)\|\Big] + \sum_{t=1}^T\sum_{i=1}^{n_1}\eta \mathbb{E}[D_{\psi_1}(x_1,v_{1,i}(t))].\label{strongly_lower_bound}
\end{align}
Combining \eqref{strongly_upper_bound}-\eqref{strongly_lower_bound} with Lemma \ref{lem2}, we get
\begin{align*}
	\bar{R}_1^{(i)}(T)&\le 4L\sum_{t=1}^T\sum_{l=1}^2\left(n_l(L +\nu_l)\Gamma_l\sum_{s=1}^{t-1}\theta_l^{t-1-s}\alpha(s-1) + 2(L + \nu_l)\alpha(t-1)\right) \notag\\
	&\quad + 4L\sum_{t=1}^T\sum_{l=1}^2n_l\Gamma_l\theta_l^{t-1}\Lambda_l + \sum_{t=1}^T\alpha(t)(L+\nu_1)^2.
\end{align*}
Since $\sum_{t=1}^T\frac{1}{t}\le 1 + \int_{1}^T\frac{1}{t}dt = 1 + \log T$, the above relation together with \eqref{exchange_sum} yields \eqref{regret_bound_final_1}.

\noindent{\bf Proof of Theorem 4}. According to \eqref{inner_bound_1} and the definition of $R_l^2$, we have that, for all $l\in\{1,2\}$ and $x_l\in X_l$, 
\[\sum_{s=0}^{t-1}\sum_{i=1}^{n_l}\langle \alpha(s)\hat{g}_{l,i}(s),x_{l,i}(s+1) - x_l\rangle\le n_lR_l^2.\]
Furthermore, by using a decomposition similar to \eqref{upper_bound_1}, we derive the following upper bound
\begin{equation}\label{ergo_upper_bound_1}
	\sum_{s=0}^{t-1}\sum_{i=1}^{n_l}\mathbb{E}[\langle \alpha(s)\hat{g}_{l,i}(s), v_{l,i}(s) - x_l\rangle] \le n_lR_l^2 + \sum_{s=0}^{t-1}\sum_{i=1}^{n_l}\alpha^2(s)\mathbb{E}[\|\hat{g}_{l,i}(s)\|_{\ast}^2].
\end{equation}
Construct an auxiliary sequence $\{\hat{v}_{l,i}(t)\}_{t\ge 0}$ by letting $\hat{v}_{l,i}(0) = x_{l,i}(0)$ and
\[\hat{v}_{l,i}(t) = P_{\hat{v}_{l,i}(t-1)}^l(\alpha(t)(g_{l,i}(t) - \hat{g}_{l,i}(t))),\ \forall t\ge 1\]
where $P_{\cdot}^l(\cdot)$ is the prox-mapping defined in \eqref{prox_mapping}. Then, by Assumption \ref{asm2} and Lemma 6.1 of \cite{nj:09},
\begin{align}
	\sum_{s=0}^{t-1}\sum_{i=1}^{n_l}\langle \alpha(s)(g_{l,i}(s) - \hat{g}_{l,i}(s)), \hat{v}_{l,i}(s) - x_l\rangle &\le n_lR_l^2 + \frac{n_l\nu_l^2}{2}\sum_{s=0}^{t-1}\alpha^2(s).\label{ergo_upper_bound_2}
\end{align}
Since $v_{l,i}(s)$ and $\hat{v}_{l,i}(s)$ are adapted to $\mathcal{F}_s$, it follows from an analysis similar to \eqref{unbias} that 
\begin{align}
	\mathbb{E}\left[\sum_{s=0}^{t-1}\sum_{i=1}^{n_l}\langle \alpha(s)(g_{l,i}(s) - \hat{g}_{l,i}(s)), v_{l,i}(s) - \hat{v}_{l,i}(s)\rangle\right] &= 0.\label{ergo_upper_bound_3}
\end{align}
As a combination of \eqref{ergo_upper_bound_1}-\eqref{ergo_upper_bound_3}, we obtain
\begin{equation}\label{ergo_upper_bound}
	\mathbb{E}\left[\max_{x_l\in X_l}\sum_{s=0}^{t-1}\sum_{i=1}^{n_l}\langle \alpha(s)\hat{g}_{l,i}(s), v_{l,i}(s) - x_l\rangle\right] \le 2n_lR_l^2 + n_l\left((L + \nu_l)^2 + \frac{1}{2}\nu_l^2\right)\sum_{s=0}^{t-1}\alpha^2(s).
\end{equation}
On the other hand, it follows from the convexity of $f_{1,i}(\cdot,x_2)$ that
\begin{align}
	\sum_{i=1}^{n_1}\langle \alpha(s)g_{1,i}(s), v_{1,i}(s) - x_1\rangle &\ge \sum_{i=1}^{n_1}\alpha(s)[f_{1,i}(v_{1,i}(s),u_{2,i}(s)) - f_{1,i}(x_1,u_{2,i}(s))].\label{ergo_lower_bound_1}
\end{align}
Derivation similar to \eqref{add_substract} yields
\begin{align}
	\sum_{i=1}^{n_1}f_{1,i}(v_{1,i}(s),u_{2,i}(s))&\ge n_1U(\bar{x}_1(s),\bar{x}_2(s)) - L\sum_{i=1}^{n_1}(\|u_{2,i}(s) - \bar{x}_2(s)\| + \|v_{1,i}(s) - \bar{x}_1(s)\|)
\end{align} 
Note that
\begin{align}
	-\sum_{i=1}^{n_1}f_{1,i}(x_1,u_{2,i}(s)) &= -\sum_{i=1}^{n_1}f_{1,i}(x_1,u_{2,i}(s)) + \sum_{i=1}^{n_1}f_{1,i}(x_1,\bar{x}_{2}(s))- \sum_{i=1}^{n_1}f_{1,i}(x_1,\bar{x}_{2}(s))\notag\\
	&\ge -L\sum_{i=1}^{n_1}\|u_{2,i}(s) - \bar{x}_2(s)\|- n_1U(x_1,\bar{x}_2(s)).\label{ergo_lower_bound_2}
\end{align}
By the concavity of $U(x_1,\cdot)$,
\begin{align}
	U(x_1,\hat{x}_{2,j}(t)) &\ge \frac{1}{\sum_{s=0}^{t-1}\alpha(s)}\sum_{s=0}^{t-1}\alpha(s)U(x_1,x_{2,j}(s)) \notag\\
	&= \frac{1}{\sum_{s=0}^{t-1}\alpha(s)}\sum_{s=0}^{t-1}\alpha(s)[U(x_1,x_{2,j}(s)) - U(x_1,\bar{x}_2(s)) + U(x_1,\bar{x}_2(s))]\notag\\
	&\ge \frac{1}{\sum_{s=0}^{t-1}\alpha(s)}\sum_{s=0}^{t-1}\alpha(s)[-L\|x_{2,j}(s) - \bar{x}_2(s)\| + U(x_1,\bar{x}_2(s))].\notag
\end{align}
Therefore, combining \eqref{ergo_lower_bound_1}-\eqref{ergo_lower_bound_2} and taking an ergodic average, we obtain
\begin{align}
	&\quad\frac{1}{\sum_{s=0}^{t-1}\alpha(s)}\sum_{s=0}^{t-1}\alpha(s)\frac{1}{n_1}\sum_{i=1}^{n_1}\langle \alpha(s)g_{1,i}(s), v_{1,i}(s) - x_1\rangle\notag\\
	&\ge \frac{1}{\sum_{s=0}^{t-1}\alpha(s)}\sum_{s=0}^{t-1}\alpha(s)U(\bar{x}_1(s),\bar{x}_2(s)) - U(x_1,\hat{x}_{2,j}(t))\notag\\
	&\quad - \frac{1}{\sum_{s=0}^{t-1}\alpha(s)}\sum_{s=0}^{t-1}\alpha(s)\left[\frac{1}{n_1}\sum_{i=1}^{n_1}(L(2\|u_{2,i}(s) - \bar{x}_2(s)\| + \|v_{1,i}(s) - \bar{x}_1(s)\|)) + L\|x_{2,j}(s) - \bar{x}_2(s)\|\right].\label{final_lower_bound_1}
\end{align}
In a similar way, we show that for all $x_2\in X_2$,
\begin{align}
	&\quad\frac{1}{\sum_{s=0}^{t-1}\alpha(s)}\sum_{s=0}^{t-1}\alpha(s)\frac{1}{n_2}\sum_{i=1}^{n_2}\langle \alpha(s)g_{2,i}(s), v_{2,i}(s) - x_2\rangle\notag\\
	&\ge -\frac{1}{\sum_{s=0}^{t-1}\alpha(s)}\sum_{s=0}^{t-1}\alpha(s)U(\bar{x}_1(s),\bar{x}_2(s)) + U(\hat{x}_{1,i}(t),x_2)\notag\\
	&\quad - \frac{1}{\sum_{s=0}^{t-1}\alpha(s)}\sum_{s=0}^{t-1}\alpha(s)\left[\frac{1}{n_2}\sum_{i=1}^{n_2}(L(2\|u_{1,i}(s) - \bar{x}_1(s)\| + \|v_{2,i}(s) - \bar{x}_2(s)\|)) + L\|x_{1,i}(s) - \bar{x}_1(s)\|\right].\label{final_lower_bound_2}
\end{align}
Adding \eqref{final_lower_bound_1}-\eqref{final_lower_bound_2} and utilizing \eqref{ergo_upper_bound}, we get
\begin{align*}
	&\quad\mathbb{E}\left[\max_{x_2\in X_2}U(\hat{x}_{1,i}(t),x_2) - \min_{x_1\in X_1}U(x_1,\hat{x}_{2,i}(t))\right]\\
	&\le \frac{1}{\sum_{s=0}^{t-1}\alpha(s)}\sum_{l=1}^2\left(2R_l^2 + \left((L + \nu_l)^2 + \frac{\nu_l^2}{2}\right)\sum_{s=0}^{t-1}\alpha^2(s)\right)\\
	&\quad + \frac{1}{\sum_{s=0}^{t-1}\alpha(s)}\sum_{s=0}^{t-1}4L\alpha(s)(H_1(s) + H_2(s)).
\end{align*}
Thus, the conclusion follows from Lemma \ref{lem2}.

\noindent{\bf Proof of Corollary \ref{col2}}. Let $(x_1^{\ast},x_2^{\ast})$ be the NE and note that
\begin{equation}\label{gap_lower_bound}
	\mathbb{E}\left[\max_{x_2\in X_2}U(\hat{x}_{1,i}(t),x_2) - \min_{x_1\in X_1}U(x_1,\hat{x}_{2,j}(t))\right] \ge \mathbb{E}\left[U(\hat{x}_{1,i}(t),x_2^{\ast}) - U(x_1^{\ast},\hat{x}_{2,j}(t))\right].
\end{equation}
We further have the decomposition
\begin{align*}
	\mathbb{E}\left[U(\hat{x}_{1,i}(t),x_2^{\ast}) - U(x_1^{\ast},\hat{x}_{2,j}(t))\right] &= \mathbb{E}\left[U(\hat{x}_{1,i}(t),x_2^{\ast}) - U(x_1^{\ast},x_2^{\ast}) + U(x_1^{\ast},x_2^{\ast}) - U(x_1^{\ast},\hat{x}_{2,j}(t))\right].
\end{align*}
Since $U(\cdot,\cdot)$ is $\mu$-strongly convex-strongly concave, by the definition of NE, we obtain
\[U(\hat{x}_{1,i}(t),x_2^{\ast}) - U(x_1^{\ast},x_2^{\ast})\ge \langle \partial_1U(x_1^{\ast},x_2^{\ast}),\hat{x}_{1,i}(t) - x_1^{\ast}\rangle + \frac{\mu}{2}\|\hat{x}_{1,i}(t) - x_1^{\ast}\|^2\ge \frac{\mu}{2}\|\hat{x}_{1,i}(t) - x_1^{\ast}\|^2.\]
In a similar way,
\[U(x_1^{\ast},x_2^{\ast}) - U(x_1^{\ast},\hat{x}_{2,j}(t)) \ge \frac{\mu}{2}\|\hat{x}_{2,j}(t) - x_2^{\ast}\|^2.\]
Therefore,
\[\mathbb{E}\left[U(\hat{x}_{1,i}(t),x_2^{\ast}) - U(x_1^{\ast},\hat{x}_{2,j}(t))\right]\ge \mathbb{E}\left[\frac{\mu}{2}\|\hat{x}_{1,i}(t) - x_1^{\ast}\|^2 + \frac{\mu}{2}\|\hat{x}_{2,j}(t) - x_2^{\ast}\|^2\right].\]
By Theorem \ref{thm5} and \eqref{gap_lower_bound}, we get the conclusion.

\noindent{\bf Proof of Theorem 5}. Applying Lemma \ref{lem1} to \eqref{def_x}, we get that for $l = 1,2$,
\begin{align}
	D_{\psi_l}(x_l,x_{l,i}(t+1))&\le D_{\psi_l}(x_l,v_{l,i}(t)) + \langle \alpha(t)\hat{g}_{l,i}(t),x_l - v_{l,i}(t)\rangle + \frac{1}{2}\alpha^2(t)\|\hat{g}_{l,i}(t)\|_{\ast}^2.\label{recursive}
\end{align}
By Assumption \ref{asm3} and $\sum_{i=1}^{n_l}w_{l,ij}(t) = 1$, $\sum_{i=1}^{n_l}D_{\psi_l}(x_l,v_{l,i}(t))\le \sum_{i=1}^{n_l}\sum_{j=1}^{n_l}w_{l,ij}(t)D_{\psi_l}(x_l,x_{l,j}(t)) = \sum_{i=1}^{n_l}D_{\psi_l}(x_l,x_{l,i}(t))$. It then follows from \eqref{recursive} that
\begin{align}
	\sum_{i=1}^{n_l}D_{\psi_l}(x_l,x_{l,i}(t+1))&\le \sum_{i=1}^{n_l}D_{\psi_l}(x_l,x_{l,i}(t)) + \sum_{i=1}^{n_l}\langle \alpha(t)\hat{g}_{l,i}(t),x_l - v_{l,i}(t)\rangle + \frac{1}{2}\alpha^2(t)\sum_{i=1}^{n_l}\|\hat{g}_{l,i}(t)\|_{\ast}^2\notag
\end{align}
Plugging \eqref{lower_bound_2} to this relation, we obtain
\begin{align}
	\frac{1}{n_1}\sum_{i=1}^{n_1}D_{\psi_1}(x_1, x_{1,i}(t+1)) &\le \frac{1}{n_1}\sum_{i=1}^{n_1}D_{\psi_1}(x_1, x_{1,i}(t)) + \alpha(t)(U(x_1,\bar{x}_2(t)) - U(\bar{x}_1(t),\bar{x}_2(t)))\notag\\
	&\quad + \alpha(t)L\frac{1}{n_1}\sum_{i=1}^{n_1}(\|x_{1,i}(t) - \bar{x}_1(t)\| + \|v_{1,i}(t) - x_{1,i}(t)\| + 2\|u_{2,i}(t) - \bar{x}_2(t)\|)\notag\\
	&\quad + \alpha^2(t)\frac{(L + \nu_1)^2}{2} + \alpha(t)\frac{1}{n_1}\sum_{i=1}^{n_1}\langle\hat{g}_{1,i}(t) - g_{1,i}(t),x_1 - v_{1,i}(t)\rangle. \label{network_1}
\end{align}
Similar to \eqref{lower_bound_2}, we also derive a lower bound for $\sum_{i=1}^{n_2}\langle\alpha(t)g_{2,i}(t),x_2 - v_{2,i}(t)\rangle$. Furthermore,
\begin{align}
	\frac{1}{n_2}\sum_{i=1}^{n_2}D_{\psi_2}(x_2, x_{2,i}(t+1)) &\le \frac{1}{n_2}\sum_{i=1}^{n_2}D_{\psi_2}(x_2, x_{2,i}(t)) + \alpha(t)(U(\bar{x}_1(t),\bar{x}_2(t)) - U(\bar{x}_1(t),x_2))\notag\\
	&\quad + \alpha(t)L\frac{1}{n_2}\sum_{i=1}^{n_2}(\|x_{2,i}(t) - \bar{x}_2(t)\| + \|v_{2,i}(t) - x_{2,i}(t)\| + 2\|u_{1,i}(t) - \bar{x}_1(t)\|)\notag\\
	&\quad + \alpha^2(t)\frac{(L + \nu_2)^2}{2} + \alpha(t)\frac{1}{n_2}\sum_{i=1}^{n_2}\langle\hat{g}_{2,i}(t) - g_{2,i}(t),x_2 - v_{2,i}(t)\rangle. \label{network_2}
\end{align}

Let $(x_1,x_2) = (x_1^{\ast},x_2^{\ast})$ be the NE and consider the following Lyapunov function
\begin{equation}\label{def-v}
	V(t,x_1^{\ast},x_2^{\ast}) = \frac{1}{n_1}\sum_{i=1}^{n_1}D_{\psi_1}(x_1^{\ast},x_{1,i}(t)) + \frac{1}{n_2}\sum_{i=1}^{n_2}D_{\psi_2}(x_2^{\ast},x_{2,i}(t)).
\end{equation}
Recall that $v_{l,i}(t)$, $\bar{x}_l(t)$ and $u_{l,i}(t)$ are adapted to $\mathcal{F}_t$. By adding \eqref{network_1} and \eqref{network_2}, taking conditional expectation on $\mathcal{F}_t$, and using \eqref{condition_unbias}, we obtain
\begin{align}
	\mathbb{E}[V(t+1,x_1^{\ast},x_2^{\ast})|\mathcal{F}_{t}] &\le V(t,x_1^{\ast},x_2^{\ast}) - \alpha(t)(U(\bar{x}_1(t), x_2^{\ast}) - U(x_1^{\ast},\bar{x}_2(t)))\notag\\
	&\quad + \alpha^2(t)\left(\frac{(L + \nu_1)^2}{2} + \frac{(L + \nu_2)^2}{2}\right)\notag\\
	&\quad + \alpha(t)L\sum_{l=1}^2\frac{1}{n_l}\sum_{i=1}^{n_l}e_{l,i}(t),\label{V_iter}
\end{align}
where $e_{l,i}(t) = \|x_{l,i}(t) - \bar{x}_l(t)\| + \|v_{l,i}(t) - x_{l,i}(t)\| + 2\|u_{3-l,i}(t) - \bar{x}_{3-l}(t)\|$.\\
By the definition of $H_l(t)$ in \eqref{H_l} and exchanging the order of summation, we obtain
\[\sum_{t=1}^T\alpha(t)H_l(t)\le\frac{n_l\Gamma_l\Lambda_l\alpha(0)}{1 - \theta_l} + 2(L+\nu_l)\sum_{t=1}^T\alpha^2(t-1) + \frac{n_l\Gamma_l(L + \nu_l)}{1 - \theta_l}\sum_{t=1}^T\alpha^2(t-1).\]
Therefore, it follows from $\sum_{t=1}^{\infty}\alpha^2(t) < \infty$ that $\sum_{t=1}^{\infty}\alpha(t)H_l(t) < \infty$. We further obtain $\sum_{t=1}^{\infty}\alpha(t)\mathbb{E}[e_{l,i}(t)] < \infty$ by using Lemma \ref{lem2}.
By the monotone convergence theorem, 
\[\mathbb{E}[\sum_{t=1}^{\infty}\alpha(t)e_{l,i}(t)] = \sum_{t=1}^{\infty}\alpha(t)\mathbb{E}[e_{1,i}(t)] < \infty.\]
Thus, $\sum_{t=1}^{\infty}\alpha(t)e_{l,i}(t) < \infty$ with probability $1$.
Meanwhile, note by the definition of NE that
\begin{equation}\label{U_diff}
	U(\bar{x}_1(t), x_2^{\ast})\ge U(x_1^{\ast},x_2^{\ast})\ge U(x_1^{\ast},\bar{x}_2(t)).
\end{equation}
By Lemma \ref{lem3}, $V(t,x_1^{\ast},x_2^{\ast})$ converges to a non-negative random variable with probability $1$ and 
\[0\le\sum_{t=0}^{\infty}\alpha(t)(U(\bar{x}_1(t),x_2^{\ast}) - U(x_1^{\ast},\bar{x}_2(t))) < \infty,\ a.s..\]
Also, we have
\[0\le \sum_{t=0}^{\infty}\alpha(t)\|x_{l,i}(t) - \bar{x}_l(t)\| < \infty,\ a.s.\]
Therefore, by $\sum_{t=0}^{\infty}\alpha(t) = \infty$, there exists a subsequence $\{t_r\}$ such that with probability $1$,
\[\lim_{r\to\infty}U(x_1^{\ast},\bar{x}_2(t_r)) = U(x_1^{\ast},x_2^{\ast}) = \lim_{r\to\infty}U(\bar{x}_1(t_r),x_2^{\ast}),\]
and for all $i\in\mathcal{V}_1$, $j\in\mathcal{V}_2$,
\begin{equation}\label{limit_error}
	\lim_{r\to\infty}x_{1,i}(t_r) = \lim_{r\to\infty}\bar{x}_1(t_r),\quad \lim_{r\to\infty}x_{2,j}(t_r) = \lim_{r\to\infty}\bar{x}_2(t_r).
\end{equation}
The bounded sequence $\{(\bar{x}_1(t_r),\bar{x}_2(t_r))\}$ has a convergent subsequence, and without loss of generality, we let it be indexed by the same index set $\{t_r,r = 1,2,\dots\}$. By the strict convexity-concavity of $U$, the NE is unique. Thus, according to the continuity of $U(\cdot,\cdot)$, $\bar{x}_1(t_r)\to x_1^{\ast}$ and $\bar{x}_2(t_r)\to x_2^{\ast}$ with probability $1$. Using \eqref{limit_error}, we further obtain $x_{1,i}(t_r)\to x_1^{\ast}$ and $x_{2,j}(t_r)\to x_2^{\ast}$. Therefore, by Assumption \ref{asm3} and the convergence of $V(t,x_1^{\ast},x_2^{\ast})$, $V(t,x_1^{\ast},x_2^{\ast})\to 0$ with probability $1$. Then, by \eqref{breg_prop2} and \eqref{def-v}, $x_{1,i}(t)\to x_1^{\ast}$ and $x_{2,j}(t)\to x_2^{\ast}$ with probability $1$.

\bibliographystyle{IEEEtran}
\bibliography{aistats2022.bib}
\end{document}